\documentclass[12p, reqno]{amsart}
\usepackage{amsfonts}
\usepackage{amsmath,amsthm,graphicx,mathrsfs,url   }
\usepackage{amssymb}
\usepackage{enumitem}

\usepackage{graphicx}
\usepackage[usenames,dvipsnames]{color}
\usepackage[colorlinks=true, linkcolor=Red, citecolor=Green]{hyperref}
\usepackage[displaymath,mathlines]{lineno}
\def\squarebox#1{\hbox to #1{\hfill\vbox to #1{\vfill}}}

\newcommand{\R}{{\mathbb R}}
\newcommand{\C}{{\mathbb C}}
\newcommand{\N}{{\mathbb N}}

\renewcommand{\Re}{\mathop{\rm Re}\nolimits}
\renewcommand{\Im}{\mathop{\rm Im}\nolimits}

\theoremstyle{plain}

\newtheorem{thm}{Theorem}[section]

\newtheorem{lem}{Lemma}[section]
\newtheorem{rem}{Remark}
\newtheorem{prop}{Proposition}[section]

\numberwithin{equation}{section}

\def\re{\mbox{\rm Re}\:}

\def\R{{\mathbb R}}
\def\C{{\mathbb C}}

\def\hc{{\mathcal H}}
\def\pa{\partial}

\def\ii{{\bf i}}

\def\cc{{\mathcal C}}

\def\f3{\frac{3}{2}\delta}
\def\ep{\epsilon}

\def\supp{{\rm supp}\:}

\def\12{\frac{1}{2}}

\def\phi{\varphi}
\def\epsilon{\varepsilon}
\def\kappa{\varkappa}
\def\ep{\epsilon}

\def\dc{{\mathcal D}}
\def\oc{{\mathcal O}}
\def\pa{\partial}

\def\id{{\rm Id}}
\def\g{\Gamma}

\begin{document}

\title[Absence of eigenvalues]{Absence of eigenvalues of dissipative operator for strictly convex obstacles}\author[V. Petkov]{Vesselin Petkov}

\address{Universit\'e de Bordeaux, Institut de Mathématiques de Bordeaux, 351, Cours de la Libération,
33405 Talence, France}
\email{petkov@math.u-bordeaux.fr} 

\maketitle
\begin{abstract}
We study the wave equation in the exterior of a strictly convex bounded domain $K \subset \R^d, d \geq 3,$ odd, with dissipative boundary condition $\pa_{\nu} u - \gamma(x) \pa_t u = 0$ on the boundary $\Gamma$ and $0 < \gamma(x) <1, \:\forall x \in \Gamma.$ The solutions are described by a contraction semigroup $V(t) = e^{tG}, \: t \geq 0.$  In \cite{P3} we established that for $\gamma \equiv const$  and $K = \{x \in \R^3: \:|x| \leq 1\}$ the operator $G$ has no eigenvalues and we conjectured that the same result holds for every strictly convex obstacle.  In this paper we prove this conjecture.
 \end{abstract}

\section{Introduction}
Let $K \subset \R^d,$ $d \geq 3$, $d$ odd, be a bounded non-empty domain with $C^{\infty}$ strictly convex boundary $\Gamma.$   Let $\Omega = \R^d \setminus \bar{K}$ be connected and $K \subset \{x \in \R^d:\:|x| < \rho\}.$ 
Consider the boundary problem
\begin{equation} \label{eq:1.1}
\begin{cases} u_{tt} - \Delta_x u = 0 \: {\rm in}\: \R_t^+ \times \Omega,\\
\partial_{\nu}u - \gamma(x) \pa_t u= 0 \: {\rm on} \: \R_t^+ \times \Gamma,\\
u(0, x) = f_1, \: u_t(0, x) = f_2 \end{cases}
\end{equation}
with initial data $(f_1, f_2) \in \hc = H^1(\Omega) \times L^2(\Omega).$
Here $\nu(x)$ is the unit outward normal at $x \in \Gamma$ pointing into $\Omega$ and $\gamma(x) > 0$ is a $C^{\infty}$ function on $\Gamma.$ 
Introduce the contraction  semi-group $V(t)$  in ${\mathcal H}$ with  generator
$$ G = \Bigl(\begin{matrix} 0 & 1\\ \Delta & 0 \end{matrix} \Bigr).$$
The operator $G$  has domain  $\dc$ given by the closure in the graph norm
$$|\|f\| | = (\|f\|_{{\mathcal H}}^2 + \|G f\|^2_{{\mathcal H}})^{1/2} $$
 of functions $f = (f_1, f_2) \in C_{(0)}^{\infty} (\R^d) \times C_{(0)}^{\infty} (\R^d)$ satisfying the boundary condition $\partial_{\nu} f_1 - \gamma f_2 = 0$ on $\Gamma.$

The solution of the problem (\ref{eq:1.1}) has the form $V(t)f = e^{tG} f,\: t \geq 0$. The point spectrum $\sigma_p(G)$ of $G$ in $\C_{-} = \{z \in \C:\:\Re z < 0\}$ is formed by isolated eigenvalues with finite multiplicity, $\sigma_p(G) \cap \ii \R = \emptyset$ and $G$ has continuous spectrum $\sigma_c(G) = \ii \R$ (see Section 8, \cite{LP1}).
Notice that if $Gf =\lambda f$ with $0 \neq f \in \mathcal D, \: \Re \lambda < 0,$ we have 
 solution $u(t, x) = V(t) f = e^{\lambda t} f(x) $  of (\ref{eq:1.1}) with exponentially decreasing global energy. Such solutions are called asymptotically disappearing and they are important for scattering problems (see for more details \cite{P1}, \cite{P3}). The location in $\C_{-}$ of the eigenvalues of $G$ has been studied in \cite{P1} improving previous results of Majda \cite{Ma}. In the special case when $\gamma \equiv 1,$  and $K$ is the unit ball $B_3 = \{x \in \R^3: \;|x|\leq 1\}$ it was proved in \cite{P1} that $G$ has no eigenvalues. 
The case $ \gamma(x) > 1,\: \forall x \in \g$ has been examined in \cite{P1} and a Weyl formula for the counting function of the eigenvalues of $G$ close to the negative real axis has been established in \cite{P2}.

   Throughout this paper we assume that $0 < \gamma(x) < 1, \: \forall x \in \g$.  This case has been studied in \cite{P1}, \cite{P3}. In particular, for $\gamma \equiv const$ and $K = B_3$  in Appendix, \cite{P3} it was proved that $G$ has no eigenvalues. We conjectured in \cite{P3} that the same is true for every strictly convex obstacle. Our main result  is the following
   \begin{thm} For strictly convex obstacles the operator $G$ has no eigenvalues $\lambda$ with $\Re \lambda < 0.$
   \end{thm}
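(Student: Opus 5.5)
The plan is to reduce the statement to a stationary boundary problem and then rule out nontrivial solutions with integral identities. In these identities the strict convexity of $K$ and the condition $0<\gamma<1$ fix the signs of the boundary terms.

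\emph{Reduction.} Suppose $Gf=\lambda f$ with $0\neq f=(f_1,f_2)\in\dc$ and $\Re\lambda<0$. Then $f_2=\lambda f_1$. The function $u=f_1\in H^1(\Omega)$ solves
$$(\Delta-\lambda^2)u=0 \ \text{in}\ \Omega,\qquad \pa_\nu u=\gamma\lambda u \ \text{on}\ \Gamma .$$
Write $\lambda=-a+\ii b$ with $a>0$.
\begin{itemize}
\item If $\lambda^2\notin(-\infty,0]$, then $u$ is an $L^2$ solution of the Helmholtz equation outside $B_\rho=\{|x|<\rho\}$. Expanding in spherical harmonics, or using the free resolvent with $d$ odd, shows that $u$ is a finite combination of terms $e^{\lambda|x|}|x|^{-(d-1)/2}(\cdots)$. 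Hence $u$ and all its derivatives decay exponentially, which justifies every integration by parts on $\Omega$ below.
\item If $\lambda^2\in(-\infty,0]$, then $a>0$ forces $b=0$, so $\lambda=-a$ is real and $\lambda^2=a^2>0$. This falls into the previous case.
\end{itemize}

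\emph{Energy identity.} Multiply the equation by $\bar u$ and use Green's formula, recalling that $\nu$ points into $\Omega$. This gives
$$\int_\Omega\bigl(|\nabla u|^2+\lambda^2|u|^2\bigr)\,dx=-\gamma\text{-weighted term}=-\lambda\int_\Gamma\gamma|u|^2\,dS .$$
Taking imaginary parts gives $2ab\|u\|^2=b\int_\Gamma\gamma|u|^2$. So either $b=0$, or $\int_\Gamma\gamma|u|^2=2a\|u\|^2$. Taking real parts gives
$$\|\nabla u\|^2+(a^2-b^2)\|u\|^2=a\int_\Gamma\gamma|u|^2 .$$
These relations alone do not produce a contradiction.

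\emph{Multiplier identity.} The second ingredient is a Rellich–Morawetz identity. Let $r(x)=\operatorname{dist}(x,K)$. Strict convexity makes $r$ smooth and convex in $\bar\Omega$, with $\nabla r=\nu$ on $\Gamma$ and $\nabla^2 r\ge0$. I would multiply the equation by $\overline{\nabla\psi\cdot\nabla u}+\tfrac12(\Delta\psi)\bar u$ and take real parts, with either $\psi=r^2/2$ or $\psi=|x|^2/2$ (origin chosen inside $K$, so $x\cdot\nu>0$ on $\Gamma$). The interior terms contain $\int\nabla^2\psi(\nabla u,\nabla\bar u)\ge0$, which is positive by convexity. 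They also contain $\lambda^2$-terms that will be eliminated using the two relations above. On $\Gamma$ the terms are
$$(\pa_\nu\psi)\bigl(|\pa_\nu u|^2-|\nabla_\Gamma u|^2-\Re(\lambda^2)|u|^2\bigr)+\text{(terms from }\Delta\psi).$$
Inserting $\pa_\nu u=\gamma\lambda u$ turns the first piece into $(\pa_\nu\psi)\bigl(\gamma^2|\lambda|^2-\Re\lambda^2\bigr)|u|^2=(\pa_\nu\psi)\bigl((\gamma^2-1)a^2+(\gamma^2+1)b^2\bigr)|u|^2$. The tangential and $\Delta\psi$ terms are controlled using the positive curvature of $\Gamma$. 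The aim is an inequality in which a nonnegative quantity is bounded by a strictly negative one unless $u\equiv0$.

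\emph{Main obstacle.} The sign of $(\gamma^2-1)a^2+(\gamma^2+1)b^2$ is indefinite. The hypothesis $\gamma<1$ helps only in the regime $b$ small relative to $a$, where this term is negative. My first attempt would combine the multiplier identity with a suitable multiple of the imaginary-part relation $\int_\Gamma\gamma|u|^2=2a\|u\|^2$ (when $b\neq0$) to absorb the $b^2$ boundary term. I would also exploit the exponentially growing weight $e^{-2\Re\lambda\, r}$, i.e. work with $w=e^{-\lambda r}u$, which is outgoing-normalized. Convexity of $r$ then produces an extra positive term $2a\int\nabla^2 r(\cdot)$ together with a boundary term with the right sign.

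\emph{Fallback.} If a direct identity does not close, I would fall back on the semiclassical picture. This means writing the condition as $\mathcal N(\lambda)u|_\Gamma=\gamma\lambda u|_\Gamma$ for the exterior Dirichlet-to-Neumann map. For strictly convex $\Gamma$, $\mathcal N(\lambda)$ has principal symbol $\sqrt{\lambda^2+|\xi'|^2}$ (suitably rooted), with glancing-region control. One would then show that $\sqrt{\lambda^2+|\xi'|^2}-\gamma\lambda$ is elliptic for $\gamma<1$, which excludes large $|\lambda|$. The multiplier argument would handle the remaining bounded set of $\lambda$.
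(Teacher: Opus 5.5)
\textbf{There is a genuine gap: the step that has to produce the contradiction is never carried out, and the fallback rests on a false ellipticity claim.}

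Your reduction and the two relations from Green's formula are correct. Together, however, they are only the energy balance $|\Re\lambda|\bigl(\|\nabla u\|^2+|\lambda|^2\|u\|^2\bigr)=|\lambda|^2\int_\Gamma\gamma|u|^2$ of the decaying solution $e^{\lambda t}(u,\lambda u)$. So the whole proof rests on the Rellich--Morawetz step, and that step is not done. You note that its boundary term $(\pa_\nu\psi)\bigl((\gamma^2-1)a^2+(\gamma^2+1)b^2\bigr)|u|^2$ has the wrong sign once $(1+\gamma^2)b^2>(1-\gamma^2)a^2$. The proposed remedies (absorption via $\int_\Gamma\gamma|u|^2=2a\|u\|^2$, conjugation by $e^{-\lambda r}$) remain intentions, and no inequality is derived.

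Your remark that $\gamma<1$ helps only when $b$ is small compared with $a$ is the heart of the matter, and it is not an artifact of the multiplier. With the root fixed by decay into $\Omega$, the principal symbol of $\cc(\lambda)=N(\lambda)-\lambda\gamma$ is $-\sqrt{\lambda^2+|\xi'|^2}-\gamma\lambda$ with $\Re\sqrt{\lambda^2+|\xi'|^2}>0$. Take $\lambda=\mu+\ii k$ with $\mu<0$ fixed and $k\to+\infty$. In the hyperbolic region $|\xi'|<k$,
$\sqrt{\lambda^2+|\xi'|^2}\approx|\mu|k(k^2-|\xi'|^2)^{-1/2}-\ii(k^2-|\xi'|^2)^{1/2}$.
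At $|\xi'|=k\sqrt{1-\gamma^2}$ the imaginary parts cancel, and the symbol is $\approx-|\mu|(1-\gamma^2)/\gamma=O(1)$ while $|\lambda|\to\infty$.

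Hence your fallback claim that $\sqrt{\lambda^2+|\xi'|^2}-\gamma\lambda$ is elliptic for $\gamma<1$ fails exactly where it is needed. The hypothesis $\gamma<1$ only gives the lower bound $(1-\gamma)|\Re\lambda|$ on the real part, i.e.\ ellipticity in a sector around the negative real axis. For bounded $\Re\lambda$ and large $|\Im\lambda|$ the boundary problem is degenerate: waves hitting $\Gamma$ at angle $\arccos\gamma$ to the normal are perfectly absorbed. This is the same degeneracy that makes the backward problem (\ref{eq:1.3}) solvable only with loss of derivatives. Near that set the leading terms cancel, so a multiplier identity would have to control subprincipal terms of size $|\Re\lambda|$ against $O(1)$ curvature and lower-order terms, and nothing in your sketch does this. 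Finally, the bounded set of $\lambda$ is handed to the same unclosed multiplier argument.

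The paper uses no integral identity; it has three steps.
\begin{enumerate}
\item Ikawa's coercive estimate (\ref{eq:3.2}) together with $\max\gamma<1$ gives invertibility of $\cc(\lambda)$, with $\|\cc(\lambda)^{-1}\|_{H^s\to H^s}\le B_s/(\alpha|\mu|)$ for $\mu\le-c_s$. This is the rigorous form of your ``elliptic sector''.
\item By inverse Laplace transform this yields a backward solution operator $P(t)$ of (\ref{eq:1.3}), with loss of derivatives and exponential growth as $t\to-\infty$. Cutting it off in time as in Dyatlov--Zworski gives an approximate incoming resolvent $R_a^\sharp(\lambda)$, which is entire because its kernels have compact $t$-support, with $(\Delta-\lambda^2)R_a^\sharp(\lambda)=\chi_a(\id+L_a(\lambda))$ and $\|L_a(\lambda)\|=O(|\lambda|^{-1})$. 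Via Lemma 2.1 this excludes eigenvalues with $|\lambda|\ge A_0$ in the whole left half-plane, including the degenerate regime.
\item The remaining box $\omega$ is handled by continuity in $\ep\in[0,1]$ for the coefficient $\ep\gamma$. The constants $c_0,A_0$ are uniform in $\ep$, and there are no poles in $\omega$ for small $\ep$. The perturbation formula (\ref{eq:5.5}) together with a Riesz projector estimate then shows that the supremum of such $\ep$ is $1$ and is attained.
\end{enumerate}
Your proposal has no substitute for steps 2 and 3, so as it stands it could at best cover a conic neighbourhood of the negative real axis.
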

  
  The eigenvalues of $G$ in $\C_{-}$ are incoming functions (see Section 2 for the definition of outgoing/incoming functions). The absence of eigenvalues of $G$  for $\Re\lambda > 0$ is based on the representation of the outgoing resolvent
  \begin{equation} \label{eq:1.2}
 (G - \lambda)^{-1} = -\int_0^{\infty} e^{-\lambda t} e^{t G} dt,\:\Re \lambda > 0
 \end{equation} 
which is analytic for $\Re\lambda > 0$. For the incoming eigenvalues it should be useful to have a similar representation with integral on $(-\infty, 0]$ involving some evolution operator. However, the problem
\begin{equation} \label{eq:1.3}
\begin{cases} (\pa^2_t - \Delta_x) f = 0 \: {\rm in}\: \R_t^- \times \Omega,\\
\partial_{\nu}f - \gamma(x) \pa_t f= 0 \: {\rm on} \: \R_t^- \times \Gamma,\\
f(0, x) = 0, \: f_t(0, x) = \chi(x) \end{cases}
\end{equation}  
is not $L^2$  well posed (see Theorem 1 in \cite{Ma}).
 On the other hand, the problem (\ref{eq:1.3}) is well posed with some loss of regularity (see \cite{I2}, \cite{I3}) and  for $\chi \in H_{comp}^m(\Omega), \: m > d/2 + 3$ it is possible to obtain a solution of (\ref{eq:1.3}) with the property
 $e^{ c_{m- 3/2} t} f(t, x) \in H^{m- 2}(\R_t^{-} \times \Omega)$ with some large $c_{m  - 3/2} > 0$ (see Proposition 3.2). The idea is to consider the incoming function
 \[ w(x, \lambda) = \int_{-\infty}^0 e^{- \lambda t} P(t) \chi dt, \: \Re \lambda \leq - c_{m- 3/2},\]
 where $P(t) \chi$ is a solution of (\ref{eq:1.3}) and to modify it to obtain an approximative  resolvent $R_a^{\sharp}(\lambda)$ of $(G - \lambda)^{-1}.$ 
 
  We expose  below the strategy of the proof of Theorem 1.1. In Section 3 we examine the operator $\cc(\lambda) = N(\lambda) - \lambda \gamma(x),$ where $N(\lambda): H^s(\g) \to H^{s-1}(\g)$ is the Dirichlet-to-Neumann operator defined in Section 2. We study the existence of $\cc(\lambda)^{-1}$ for $\Re \lambda \leq - c_0 < 0.$ For this purpose we apply a weak version of Theorem 2.1 in \cite{I2} concerning only $N(\lambda)$. We expect that it is possible to obtain the existence of $\cc(\lambda)^{-1}$  for large negative $\Re \lambda$ for general obstacles and we will study this problem in a further work. In Proposition 3.2 we justify the existence of an operator $P(t)$ satisfying (\ref{eq:3.13}). In Section 4 we follow the approach of the proof of Theorem 4.43 in \cite{DZ} by using a cut-off function $\zeta_a(t, x)$. Let $\chi_a \in H_{comp}^{m}(\Omega),\: \chi_a(x) \equiv 1$ for $|x| \leq \rho + 2 \ep,\: \ep > 0.$ We study the operator
 $\zeta_a P(t) \chi_a$ which satisfies
 \[\begin{cases}
 (\pa_t^2 - \Delta_x) (\zeta_a P(t) \chi_a) = : F_a(t),\\
( \pa_{\nu} - \gamma(x) \pa_t) (\zeta_a P(t) \chi_a)\vert_{\R_t^{-} \times \g} = : J_a(t). 
 \end{cases} \]
 To treat the remainder operators $F_a(t)$ and $J_a(t)$, we define an approximative resolvent
  \[R^{\sharp}_a(\lambda) = \int_{-\infty}^0 e^{-\lambda t} \Bigl[\zeta_a P(t)\chi_a  + (1- \chi_c) W_a(t)- B_a(t) \Bigr]dt, \: \Re \lambda \leq 0.\]
  with cut-off function $ \chi_c(x)$ and suitable correction operators $W_a(t),\: B_a(t)$. The kernels of these operators have compact support with respect to $t$ and this makes possible to extend analytically $R^{\sharp}_a(\lambda)$ from $\Re \lambda \leq - c_{m- 3/2}$ to $\Re\lambda < 0.$
  The operator $R^{\sharp}_a(\lambda)$ satisfies $(\Delta - \lambda^2) R^{\sharp}_a(\lambda) = \chi_a(\id + L(\lambda))$ and 
  for sufficiently large $|\lambda| \geq A_0$ we obtain the existence of $(\id + L(\lambda))^{-1}.$ This leads to the absence of eigenvalues $\lambda$ of $G$ with $|\lambda| \geq A_0.$
  Finally,  Section 5 is devoted to the absence of eigenvalues with $|\lambda| \leq A_0.$ For $\ep \in [0, 1]$ we introduce the operator $\cc_{\ep}(\lambda) = N(\lambda) - \lambda \ep\gamma(x)$ and the generator $G_{\ep}$ related to problem (\ref{eq:1.1}) with boundary condition $(\pa_{\nu}u - \ep \gamma \pa_t u)\vert_{\g} = 0.$ We introduce the bounded set $\omega =\{ z \in \C:\: - c_0 \leq \Re\lambda \leq 0,\: |\Im z| \leq A_0\}$ and define $\eta = \sup\{\ep: \:\ep \in [0,1],\: G_{\ep} \:\text{ has no eigenvalues in} \: \omega\}.$  The problem is reduced to the analysis of the case when  $G_{\eta}$ has an eigenvalue $\lambda_0 \in \omega.$  By a perturbation argument for $G_{\ep}$ and $\eta- \ep > 0$ sufficiently small we show that this is impossible.
  
  \section{Preliminaries}

In this section we collect some facts from \cite{P1}, \cite{P3}.  For $\lambda \in \C$ introduce the exterior Dirichlet-to-Neumann map
$$N(\lambda): H^s(\Gamma) \ni f \longrightarrow \pa_{\nu} u\vert_{\Gamma} \in H^{s-1}(\Gamma),$$
where $u = K(\lambda) f$ is the solution of the problem
\begin{equation} \label{eq:2.1}
\begin{cases} (-\Delta +\lambda^2) K(\lambda)f = 0 \: {\rm in}\: \Omega,\\
K(\lambda)f = f \:{\rm on}\: \Gamma,\\
K(\lambda) f : \lambda-{\rm incoming}.
 \end{cases}
\end{equation}

A function $u(x)$  is called $\lambda$-{\it outgoing} ($\lambda$-{\it incoming}) if there exists $R > \rho$ and $g \in L^2_{comp}(\R^d)$ such that
$$u(x) = R_0^{\pm}(\lambda) g,\: |x| \geq R,$$
where $R_0^{\pm}(\lambda) =(-\Delta_0 +\lambda^2)^{-1}$ is the outgoing (+) (incoming (-)) resolvent of the free Laplacian $- \Delta_0$ in $\R^d$. The resolvents $R_0^{\pm}(\lambda)$  are analytic in $\C$ and they have kernels
 \begin{equation*}   R_0^{\pm} (\lambda, x, y) = \frac{(-1)^{(d-1)/2}}{2(2 \pi)^{(d-1)/2}} \Bigl(\frac{1}{r}\pa_r\Bigr)^{(d- 3)/2} \Bigl(\frac{ e^{\mp \lambda r}}{r}\Bigr)\Big\vert_{r =  |x-y|}.
\end{equation*}

 If for $\Re \lambda < 0$ we have  $G(f_1, f_2) = \lambda  (f_1, f_2)$, then $f_1 \in H^2(\Omega)$ is $\lambda$-incoming solution of $(-\Delta + \lambda^2) f_1 = 0$ (see \cite{LP1} and Section 1 in \cite{P3}). Let $R_D (\lambda) = (-\Delta_D +\lambda^2)^{-1}$ be the incoming resolvent of the Dirichlet Laplacian $\Delta_D$ in $\Omega$ with domain ${\bf D} = H^2(\Omega) \cap H^1_0(\Omega)$
 which is analytic for $\lambda \in \C_{-} = \{ z \in \C: \: \Re \lambda \leq 0\}$. Let
 $${\bf D}_{loc} = \{ u \in L^2_{loc}(\Omega):\: \chi(x) \in C_0^{\infty}(\R^d), \: \chi(x) \equiv 1\: {\text in\:a\:neighborhood\:of} \bar{K}\: \Rightarrow \chi u \in {\bf D} \}. $$
 The incoming resolvent has meromorphic extension 
 $$R_D (\lambda):\: L^2_{comp}(\Omega) \rightarrow {\bf D}_{loc}$$
  from $\C_{-}$ to $\C$ . The solution of the problem
(\ref{eq:2.1}) with $f \in H^{3/2}(\Gamma)$ has the form
\begin{equation} \label{eq:2.2}
u = e(f) + R_D(\lambda)((\Delta -\lambda^2) (e(f)),
\end{equation} 
where $e(f): H^{3/2}(\Gamma) \ni f \to e(f) \in H^{2}_{comp}(\Omega)$ is an extension operator. Clearly, we may find $\pa_{\nu}u\vert_{\Gamma}$ by applying  (\ref{eq:2.2}).
 By (\ref{eq:2.2}) we conclude that $N(\lambda)$ is analytic in $\bar{\C}_{-}.$

We write the boundary condition  in (\ref{eq:1.1}) as 
\begin{equation*} 
\cc(\lambda)v: = (N(\lambda) - \lambda \gamma) v  = \Bigl( \id - \lambda \gamma N(\lambda)^{-1} \Bigr) N(\lambda)v  = 0,\:v=  f_1\vert_{\Gamma} \in H^{3/2}(\Gamma).
\end{equation*} 
For $\Re  \lambda \leq 0$ the operator $\cc(\lambda):\:H^{s}(\Gamma) \rightarrow H^{s-1}(\Gamma)$ has the same singularities as $N(\lambda)$.
On the other hand, $N(\lambda)^{-1}:\: H^{s}(\Gamma) \rightarrow H^{s+ 1}(\Gamma)$ is compact operator and by analytic Fredholm theorem (see \cite{P1}, \cite{P3}),  the operator $\cc(\lambda)$ is a meromorphic operator valued function for $\Re \lambda \leq 0.$ Notice that if $\cc(\lambda)$ is analytic at $\lambda_0$, then $\cc(\lambda_0): \: H^{s}(\Gamma) \rightarrow H^{s+1}(\Gamma).$ Here and below a meromorphic operator valued function $B(z)$ means that $B(z)$  have Laurent expansion with finite number negative powers of $z$ and coefficients having finite rank. In \cite{P2} it was proved that we can extend  the incoming   resolvent $(G - \lambda)^{-1},\: \lambda \notin \sigma_p(G), \: \re \lambda < 0$ as meromorphic function $(G - \lambda)^{-1}:\: {\mathcal H}_{comp} \rightarrow {\mathcal D}_{loc}$ for $\lambda \in \C,$  
  where
  $${\mathcal D}_{loc} = \{ u \in {\mathcal H}_{loc},\: \chi(x) \in C_0^{\infty}(\R^d), \: \chi(x) \equiv 1 \: {\text in\: a\: neighborhood\:of} \bar{K}\: \Rightarrow \chi u \in {\mathcal D} \}.$$ 
 Let $ \begin{pmatrix} u \\ w \end{pmatrix} = (G- \lambda)^{-1} \begin{pmatrix} f \\ g \end{pmatrix} $  with $(f, g) \in {\mathcal H}_{comp}$. It was established in \cite{P1},  \cite{P3} that $(G - \lambda)^{-1}$ is analytic at $\lambda_0$ if and only if $\cc(\lambda_0)$ is invertible and $\cc(\lambda_0)^{-1} : H^s(\Gamma) \rightarrow H^{s+1}(\Gamma) $ is analytic at $\lambda_0.$ Moreover, if $\cc(\lambda)^{-1}$ is analytic, 
  $w = \lambda u + f,$ where 
\begin{equation} \label{eq:2.3}
 u(x, \lambda) = -  R_D(\lambda)   (g + \lambda f) +  K(\lambda) \cc(\lambda)^{-1} \Bigl[ \pa_{\nu}\Bigl (R_D(\lambda) ( g + \lambda f)\Bigr)\Big\vert_{\Gamma}  + \gamma f\vert_{\Gamma} \Bigr]
\end{equation}
is a solution of the problem
\begin{equation} \label{eq:2.4}
\begin{cases}
(\Delta - \lambda^2) u = g + \lambda f \: {\rm in}\: \Omega,\\
(\pa_{\nu} - \gamma(x) \lambda) u \vert_{\Gamma} = \gamma f\vert_{\Gamma},\\
u: \lambda-{\rm incoming}. \end{cases}
\end{equation}
Notice that for $(f, g) \in {\mathcal H}_{comp}$ we obtain $u(x, \lambda) \in H^2(\Omega).$ Indeed,
\[\cc(\lambda)^{-1} \Bigl[\pa_{\nu}R_D(\lambda) ( g + \lambda f)\Big\vert_{\g}  + \gamma f\vert_{\Gamma} \Bigr] \in H^{3/2}(\Gamma)\]
and applying $K(\lambda)$, we obtain the result. 

It is easy to see that if $\cc(\lambda)^{-1}: \: H^s(\g) \rightarrow H^{s+ 1}(\g)$ for some $s_0 \geq 0$ has a pole $\lambda \in \bar{\C}_{-},$ then this operator has a pole $\lambda_0$ for every $s \geq 0.$ This follows from the fact that there exists $0 \neq v \in H^s(\g)$ such that $\cc(\lambda_0)v = (\id - \lambda_0 \gamma N(\lambda_0)^{-1}) v = 0.$ Since $N(\lambda_0)^{-1}: H^s(\g) \to H^{s+1}(\g)$ is bounded, this implies $v \in C^{\infty}(\g)$ and $(\id - \lambda_0 \gamma N(\lambda_0)^{-1})$ is not invertible in $H^s(\g)$ for all $s \geq 0.$ In Section 4 we need the following

\begin{lem} Assume that there exist $a_1 > \rho$ and $k \geq 2$ such that for $f = 0$ and every $g \in H^k(\Omega)$ with $\supp\: g \subset B(0, a_1)$ there exists a solution $u(x, \lambda)$ of $(\ref{eq:2.4})$ which for $x \in \Omega \cap\{x \in \R^d:\:|x| \leq a, \: a > \rho\}$ is analytic in a neighbourhood $\{\lambda \in \C:\: |\lambda -\lambda_0| < \delta \}$ of $\lambda_0 \in \C_{-}.$ Then  the operator $\cc(\lambda)^{-1}: H^{k +1}(\g) \rightarrow H^{k+2}(\g)$ is analytic at $\lambda_0$. 
\end{lem}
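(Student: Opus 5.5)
We argue by contradiction. Since $\cc(\lambda)$ is a meromorphic family of Fredholm operators, $\cc(\lambda)^{-1}$ is meromorphic. Suppose it has a pole at $\lambda_0$. Write the Laurent expansion
\[\cc(\lambda)^{-1} = \sum_{j=1}^{p} \frac{A_j}{(\lambda - \lambda_0)^j} + \text{(holomorphic)}, \qquad A_p \neq 0,\]
where the $A_j$ have finite rank. As noted just before the lemma, the pole occurs for every $s \geq 0$, and the range of $A_p$ consists of $C^\infty$ functions. We will produce a $g \in H^k(\Omega)$ with $\supp g \subset B(0,a_1)$ for which every incoming solution of (\ref{eq:2.4}) with $f = 0$ fails to be analytic near $\lambda_0$. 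This contradicts the hypothesis.

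\textbf{Step 1 (uniqueness).} Compare the solution $u$ given by the hypothesis with the representation (\ref{eq:2.3}) for $\lambda \neq \lambda_0$ near $\lambda_0$. If $\lambda$ is not a pole of $\cc^{-1}$, the homogeneous problem (\ref{eq:2.4}) with $f = g = 0$ has only the trivial incoming solution. Indeed, such a solution $v$ has $v|_\Gamma \in \ker \cc(\lambda)$, so $v|_\Gamma = 0$ and hence $v = K(\lambda)0 = 0$. Therefore, in a punctured disc,
\[u = -R_D(\lambda)g + K(\lambda)\cc(\lambda)^{-1}\bigl[\pa_\nu R_D(\lambda)g|_\Gamma\bigr].\]
Since $R_D(\lambda)$ is analytic in $\bar\C_-$, the term $R_D(\lambda)g$ is analytic. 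Restricting to $\Gamma$, where $u|_\Gamma = \cc(\lambda)^{-1}h(\lambda)$ with $h(\lambda) = \pa_\nu R_D(\lambda) g|_\Gamma$, we conclude that $\cc(\lambda)^{-1}h(\lambda)$ is analytic at $\lambda_0$. This uses that $u$ is analytic for $|x| \leq a$, together with the trace theorem.

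\textbf{Step 2 (surjectivity of the data map).} The map $g \mapsto h(\lambda_0) = \pa_\nu R_D(\lambda_0)g|_\Gamma$, for $g \in H^k$ supported in $B(0,a_1)\cap\Omega$, should have dense range in $H^{k+1}(\Gamma)$, or at least enough range to detect $A_p$. Here is how to get it. Given $\phi \in C^\infty(\Gamma)$, take $w$ in $H^{k+2}$ supported near $\Gamma$ with $w|_\Gamma = 0$ and $\pa_\nu w|_\Gamma = \phi$. Set $g = (\Delta - \lambda_0^2)w$. Then $R_D(\lambda_0)g = w$, since $w$ is compactly supported, hence incoming, and vanishes on $\Gamma$. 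So $h(\lambda_0) = \phi$, and in fact the same $w$ gives $h(\lambda) = \phi + O(\lambda - \lambda_0)$.

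To handle higher-order poles, use $g$ depending on parameters only through finitely many fixed choices. Consider the finite-dimensional space $\mathrm{Ran}\,A_p^*$-dual: pick $\phi$ with $A_p \phi \neq 0$, possible since $A_p \neq 0$ on the dense set $C^\infty$. The leading singular term of $\cc^{-1}h$ is then
\[(\lambda-\lambda_0)^{-p}A_p h(\lambda_0) + (\lambda-\lambda_0)^{-p+1}\bigl(A_{p-1}h(\lambda_0) + A_p h'(\lambda_0)\bigr) + \dots,\]
so analyticity forces $A_p\phi = 0$, which is a contradiction.

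\textbf{Main obstacle.} The delicate point is Step 1: justifying that the hypothesized analytic solution coincides with (\ref{eq:2.3}). This needs uniqueness of incoming solutions off the poles, which holds on a punctured neighbourhood because the poles are isolated. It also needs the regularity and support bookkeeping that makes $g \in H^k$ produce boundary data in $H^{k+1}(\Gamma)$, so that the statement is about $\cc^{-1}: H^{k+1} \to H^{k+2}$. The a priori bound on $\cc^{-1}$ for all $s$, noted before the lemma, removes any dependence on the particular Sobolev index.
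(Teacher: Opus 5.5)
Your argument is correct and is essentially the paper's proof. You assume a pole, choose $g$ supported near $\Gamma$ so that $\pa_\nu R_D(\lambda_0)g|_\Gamma$ detects the leading Laurent coefficient, and read off a non-removable singular term in $u|_\Gamma$ from (\ref{eq:2.3}). The differences are minor; your version is, if anything, slightly more careful:

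\begin{itemize}
\item You build $g$ from $w$ with $w|_\Gamma=0$, $\pa_\nu w|_\Gamma=\phi$ (the construction of Remark 2.1). This gives $R_D(\lambda_0)g$ directly and avoids the paper's detour through $q=N(\lambda_0)^{-1}h$. Note that $R_D(\lambda_0)g=-w$, not $w$, since $R_D=(-\Delta_D+\lambda^2)^{-1}$; the sign is harmless.
\item You justify explicitly that the hypothesized $u$ coincides with (\ref{eq:2.3}) off the poles, by uniqueness of incoming solutions. The paper leaves this implicit.
\item You test the top coefficient $A_p$ against $h(\lambda)=h(\lambda_0)+O(\lambda-\lambda_0)$. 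The paper instead freezes $R_D$ at $\lambda_0$ and works with an unspecified pole order $m$, which is only fine if $m$ is taken to be the full pole order.
\end{itemize}
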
 
\begin{proof}Suppose that  $\cc(\lambda)^{-1}$ has a pole at $\lambda_0.$ Then there exist $h \in H^{k+ 1}(\g),\: 0 \neq h_0 \in H^{k+2}(\g)$ and $m \geq 1$ such that for $0 \neq |\lambda - \lambda_0| < \ep$ with sufficiently small $0 < \ep \leq \delta$ we have
\[\cc(\lambda)^{-1} h=\frac{h_0}{(\lambda - \lambda_0)^m} + \text {lower order singularities} .\]
Let $q = N(\lambda_0)^{-1}h \in H^{k + 2}(\Gamma).$ It is easy to construct $\tilde{g} \in H^{k + 2}(\Omega)$ with $\supp \:\tilde{g} \subset B(0, a_1)$ such that $\tilde{g}\vert_{\Gamma} = q, \: \pa_{\nu}\tilde{g}\vert_{\Gamma} = 0.$ To do this, by using a partition of unity on $\Gamma,$ we pass to a construction in a neighbourhood $\mathcal U \subset \R^d$ of a point 
$x_0 \in \Gamma.$ Consider in ${\mathcal U}$ local normal coordinates $(y_1,...,y_d) = (y_1, y')$ such that
\[ x = \alpha(y_1, y') = \beta(y') + y_1 \nu(y'), \]
where $ y_1 = {\rm dist}\: (x, \Gamma)$ and $\nu(y')$ is an extension of unit normal vector to unit vector field. Then the boundary $\Gamma$ is given by $y_1 = 0$ and 
$\pa_{\nu} v = \pa_{y_1}v (\alpha(y_1, y)).$ If $ q(\alpha(0, y')) = s(y')$, we take $\tilde{g}(y) = s(y').$ 
Next for $f = 0$ and $g = (\Delta- \lambda_0^2)\tilde{g}\in H^{k}(\Omega)$ by our assumption there exists a solution $u(x, \lambda)$ of (\ref{eq:2.4}) which is analytic for  $x \in \Omega,\: |x| \leq a,\:|\lambda -\lambda_0| < \delta$. The function $v = R_D(\lambda_0) g$ satisfies
\[  (-\Delta + \lambda_0^2)(v + \tilde{g}) = 0,\: (v + \tilde{g})\vert_{\Gamma} = q,\]
hence $\pa_{\nu} v\vert_{\Gamma} = \pa_{\nu} (v+ \tilde{g})\vert_{\Gamma} = N(\lambda_0)q = h.$ Then in  the representation (\ref{eq:2.3}) for  $u(x, \lambda)\vert_{\g}$ we will have a singular term given  by $\cc(\lambda)^{-1} \pa_{\nu} R_D(\lambda_0)g)\vert_{\g}$ and we obtain a contradiction with the analyticity of $u(x, \lambda)$ for $|x| \leq a.$
\end{proof}
\begin{rem} By the same argument we may construct a modification $\tilde{g} \in H^m(\Omega)$ of $g \in H^m(\Omega)$ such that $\tilde{g}\vert_{\g} = 0,\: \pa_{\nu}\tilde{g}\vert_{\g} = g\vert_{\g}$. To do this, in local normal coordinates $(y_1, y')$ we take $\tilde{g}(y) = y_1 g(\alpha(y_1, y')).$
\end{rem}

\section{Existence and estimate of $\cc(\lambda)^{-1}$}
\subsection{Analyticity of $\cc(\lambda)^{-1}$}

Throughout the paper we will use the notation $\lambda = \mu + \ii k,\: \mu, \: k \in \R.$ In this subsection we study the analyticity of $\cc(\lambda)^{-1}$ for sufficiently negative $ \mu$. Denote by $( . , .)_s,\: \| . \|_s$ are the scalar product and the norm  in $H^s(\g),$ respectively. Consider the problem
 \begin{equation*} 
\begin{cases} (-\Delta +\lambda^2) K_1(\lambda)f = 0 \: {\rm in}\: \Omega,\\
K_1(\lambda)f = f\:{\rm on}\: \Gamma,\\
K_1(\lambda) f: \lambda-{\rm outgoing}.
 \end{cases}
\end{equation*} 
 and the Dirichlet-to-Neumann operator  $N_1(\lambda) f = \pa_{\nu} K_1 f(\lambda)\vert_{\g}.$ 
  Ikawa established the following
 \begin{thm} [Theorem 2.1 in \cite{I2}] Let $K$ be a strictly convex obstacle in $\R^3$. Then there exist constants $r > 0, C_m > 0$ such that for every $m \in \N$ and $\mu \geq r$ we have
 \begin{equation} \label{eq:3.1}
 - \Re ( N_1(\lambda) \varphi, \varphi)_m \geq (\mu - C_{m})\|\varphi\|_m^2,\:\: \forall \varphi \in C^{\infty}(\g)
 \end{equation}
  with $C_m$ independent of $\lambda$ and $\varphi.$ 
\end{thm}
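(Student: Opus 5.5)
The inequality is a lower bound on the real part of a boundary symbol, so I would prove it by treating $N_1(\lambda)$ as a pseudodifferential operator with the large parameter $\lambda$, $\mu=\Re\lambda\ge r$, and then applying a sharp Gårding inequality. Since this is Theorem 2.1 of \cite{I2}, what follows is a reconstruction of the expected route rather than a new argument.

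\textbf{Step 1: the symbol computation.} Work in the local normal coordinates $(y_1,y')$ of Lemma 2.1, in which $-\Delta+\lambda^2$ becomes
\[
-\partial_{y_1}^2 + R(y,D_{y'}) + \lambda^2 + \text{(first order in } \partial_{y_1}\text{)}.
\]
The outgoing condition, with kernel $e^{-\lambda r}/r$ for $\mu>0$, selects the solutions decaying in $y_1$. Away from glancing, a WKB/Calder\'on-projector parametrix gives $N_1(\lambda)=-\sqrt{\lambda^2+r(y',\xi')}+\text{lower order}$, with the branch of positive real part. The key algebraic fact is:
\begin{quote}
if $w=p+iq$, $w^2=\lambda^2+a$ with $a\ge 0$ and $p>0$, then $p\ge\mu$.
\end{quote}
Indeed $pq=\mu k$ and $p^2-q^2=\mu^2-k^2+a$. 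If $p<\mu$, then $|q|>|k|$, which forces $p^2-q^2<\mu^2-k^2$, a contradiction. Hence $-\Re$ of the principal symbol is $\ge\mu$ uniformly in $(y',\xi',k)$. For $m=0$ the sharp Gårding inequality, in the calculus with parameter $|\lambda|+|\xi'|$, then yields
\[
-\Re(N_1\varphi,\varphi)_0\ge(\mu-C)\|\varphi\|_0^2 .
\]

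\textbf{Step 2: passing to $H^m(\Gamma)$.} I would write $(\cdot,\cdot)_m=(\Lambda^m\cdot,\Lambda^m\cdot)_0$ with $\Lambda$ an elliptic first-order operator on $\Gamma$ independent of $\lambda$. Then I apply Step 1 to $\Lambda^m\varphi$ and control the commutator $[\Lambda^m,N_1(\lambda)]\Lambda^{-m}$. Its principal part is $\{\cdot,\sqrt{\lambda^2+r}\}$. Differentiating in $(y',\xi')$ only costs $O(1)$ relative to the symbol order, because the $\lambda$-dependence enters only through $\lambda^2$ and not through $y'$. So the commutator is bounded on $L^2$ uniformly in $\lambda$, which produces the constant $C_m$. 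As a cross-check on sign conventions, Green's formula with $\nu$ pointing into $\Omega$ gives
\[
-(N_1\varphi,\varphi)_0=\|\nabla u\|^2+\lambda^2\|u\|^2,\qquad u=K_1(\lambda)\varphi,
\]
which confirms that the positive sign is the right one.

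\textbf{Step 3: the glancing region (main obstacle).} The difficulty is the set where $k^2\approx r(y',\xi')$. There $\lambda^2+r$ can be of size $\mu|\lambda|$ while $\mu$ is fixed, so the square-root symbol is not smooth uniformly in the large parameter and the WKB parametrix breaks down. This is exactly where strict convexity enters. I would use the Melrose--Taylor/Ikawa Airy-function parametrix for the exterior of a strictly convex body, in which
\[
N_1(\lambda)\approx -\Theta^{1/3}\,\frac{A'}{A}\bigl(\Theta^{2/3}\zeta\bigr),
\]
with $\zeta$ a smooth function vanishing at glancing and $A$ the decaying Airy-type function (outgoing). One then has to check that $-\Re$ of this symbol is still $\ge\mu-C$. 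For $\zeta$ in a compact set, the properties of $A'/A$ on the rotated ray determined by $\arg\lambda$ should give this: the imaginary part of $\lambda$ produces a positive real part of the same type as in the algebraic lemma of Step 1.

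To combine the regions, I would glue the hyperbolic, elliptic and glancing parametrices with a microlocal partition of unity whose symbols are uniform in $\lambda$. The errors are $O(1)$ (or $O(|\lambda|^{-\infty})$ off glancing) and are absorbed into $C_m$. Taking $r$ large makes the remainder of the parametrix construction invertible, and this yields \eqref{eq:3.1}.
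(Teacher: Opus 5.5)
The paper does not prove this statement. It imports it from Ikawa \cite{I2} (used with $\gamma\equiv 0$), so your text has to stand as a proof on its own, and it has a genuine gap. All the content of \eqref{eq:3.1} sits on the glancing set, and there Step 3 only asserts the conclusion (``should give this'').

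This is the one place where strict convexity has to be used quantitatively. The correction that governs $\Re N_1(\lambda)$ near glancing has its sign dictated by the second fundamental form (see below). For the opposite curvature it has the wrong sign on the hyperbolic side. So neither an appeal to generic properties of $A'/A$ nor an analogy with your algebraic lemma can replace an actual computation.

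The bound must also hold uniformly for all $\mu\ge r$ and $k\in\R$. The scaled Airy argument $\Theta^{2/3}\zeta$ has imaginary part of order $\mu|k|^{-1/3}$, which is unbounded on that range, so ``$\zeta$ in a compact set'' is not the relevant setting. You would need:
\begin{itemize}
\item the behaviour of $A'/A$ on an unbounded complex region;
\item a parametrix whose errors are controlled uniformly through the regimes $\mu\ll|k|^{1/3}$, $\mu\sim|k|^{1/3}$, $|k|^{1/3}\ll\mu\ll|k|$ and $\mu\gtrsim|k|$;
\item its matching with the elliptic and hyperbolic parametrices.
\end{itemize}
None of this is carried out.

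Steps 1 and 2 break down at the same place, so your ``$O(1)$ errors absorbed into $C_m$'' bookkeeping is not justified.

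\textbf{Step 1.} The next term of the symbol of $-N_1(\lambda)$ contains $\partial_{y_1}r/\bigl(4(\lambda^2+r)\bigr)$. Near $k^2\approx r$ one only has $|\lambda^2+r|\ge 2\mu|k|$, so this ``lower order'' term is of size up to $|k|/\mu$. Its real part changes sign across glancing, and it is negligible relative to $\sqrt{\lambda^2+r}$ only where $|\lambda^2+r|\gg|\lambda|^{4/3}$. For $\Gamma$ strictly convex, $\partial_{y_1}r<0$, which makes this term help on the hyperbolic side; this is the convexity input mentioned above. Hence your sharp G\aa rding argument applies only outside that zone.

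\textbf{Step 2.} The claim that $[\Lambda^m,N_1(\lambda)]\Lambda^{-m}$ is bounded uniformly in $\lambda$ ``because $\lambda$ enters only through $\lambda^2$'' is false. Since $\partial_{y'}\sqrt{\lambda^2+r}=\partial_{y'}r/\bigl(2\sqrt{\lambda^2+r}\bigr)$, near glancing the commutator symbol has size $|\xi'|/\sqrt{\mu|k|}\sim\sqrt{|k|/\mu}$, which is unbounded in $k$. The point of \eqref{eq:3.1} is that $C_m$ is independent of $\lambda$; the paper relies on this to get $\lambda$-independent $c_m$ and $B_m$. So the passage from $m=0$ to general $m$ must be redone inside the glancing analysis, for instance by absorbing the commutator into the excess $-\Re\sigma(N_1)-\mu$. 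That again requires a quantitative form of the missing Step 3.

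What you have right: the location of the difficulty, the inequality $\Re\sqrt{\lambda^2+a}\ge\Re\lambda$ for $a\ge 0$, and the sign check via Green's formula. As written, though, the proposal is a plan whose decisive step is absent.
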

After the change $\eta = - \lambda,$ we deduce for operator $N(\lambda)$ related to  (\ref{eq:2.1}) and $\mu \leq - r$ the estimate
\begin{equation}\label{eq:3.2}
 - \Re ( N(\lambda)  \varphi, \varphi)_m \geq ( |\mu|  - C_{m})\|\varphi\|_m^2,\:\: \forall \varphi \in C^{\infty}(\g).
 \end{equation}
 \begin{rem}
 We apply the estimate $(\ref{eq:3.1})$ in \cite{I2} with $\gamma \equiv 0$. This estimate can be proved for odd dimensions $d > 3$ by similar argument. Notice that some estimates of the operator  $\frac{N(\lambda)}{\lambda}$ in suitable regions have been established in Section $4$ in \cite{P3}. 
 \end{rem}
Let $\delta = \max_{x \in \g} \gamma(x) < 1.$ For $m = 0$ we have $(\gamma \varphi, \varphi)_0 \leq \delta\|\varphi\|_0^2$ and we obtain the estimate
 \[ - \Re ((N(\lambda) - \lambda \gamma(x))\varphi, \varphi) _0 \geq \Bigl((1 -\delta)|\mu|- C_0\Bigr) \|\varphi\|_0^2.\]
 Setting $\alpha = 1 - \delta> 0,\: c_0 = \max \{ \frac{2C_0}{\alpha}, r\},$ for $\mu \leq -c_0$ we may absorb the term $C_0\|\varphi\|_0^2$ and obtain
\begin{equation} \label{eq:3.3}
\| \cc(\lambda) \varphi\|_0 \geq \frac{\alpha |\mu|}{2} \|\varphi\|_0, \: \: \forall \varphi \in C^{\infty}(\g).
\end{equation}
In the following we suppose that $\mu \leq - c_0.$ Set $M(\lambda) = \Bigl( Id - \lambda  \gamma N^{-1}(\lambda)\Bigr)$ and note that $\cc(\lambda) =  M(\lambda) N(\lambda).$ If for $s \geq 0$ the operator $\cc(\lambda)^{-1}: H^s (\Gamma) \rightarrow H^{s+1}(\Gamma)$ has a pole at $\lambda_0$, then $M(\lambda_0): H^s(\Gamma) \to H^s(\Gamma)$ is not invertible and there exists $0 \neq  \varphi_0 \in L^2(\Gamma)$ such that $\cc(\lambda_0) \varphi_0 =  M(\lambda_0) \varphi_0 = 0.$ Since $N(\lambda_0)^{-1} : H^s(\Gamma)  \rightarrow H^{s+ 1}(\Gamma),$ we conclude that $\varphi_0 \in C^{\infty}(\Gamma).$ This leads to a contradiction with (\ref{eq:3.3}), hence $\cc(\lambda)^{-1}$ is analytic for $\mu \leq - c_0$ and $c_0$ is independent of $s.$
\begin{rem} The invertibility of $\cc(\lambda)$ for $s \in \N$ and $\mu \geq - c_0$ has been proved in \cite{I2} following  Lemma $3.3$ in \cite{I1}. Our argument above is short and we use $(\ref{eq:3.2})$ only for $m = 0.$
\end{rem} 
 Now we will estimate the norm of $\cc(\lambda)^{-1}: H^s(\g) \to H^s(\g).$ 
 Let $h \in L^2(\g)$ and let $C(\lambda) g = h $ with $g \in H^{1}(\g)$. (We omit in the notations the dependence of $g$ of $\lambda$). Choose a sequence $\varphi_j \rightarrow_{j \to \infty} g$ in $H^{1}(\g)$ with $\varphi_j \in C^{\infty}(\g).$ Therefore
$$\|C(\lambda) (\varphi_j - g)\|_0 \leq B_0(\lambda) \|\varphi_j - g\|_{1}$$
and $C(\lambda) \varphi_j \to C(\lambda)g$ in $L^2(\g)$. On the other hand, $\|\varphi_j\|_0 \to \|g\|_0.$ We apply (\ref{eq:3.3}) with $\varphi_j$ and passing to limit, we deduce
\begin{equation} \label{eq:3.4}
\| \cc(\lambda)^{-1} h\|_0 \leq \frac{2}{\alpha |\mu|}\|h\|_0,\: \forall h \in L^2(\g).
\end{equation}
Next assume that for $\mu \leq -c_m \leq - c_0$ we have the estimate
\begin{equation} \label{eq:3.5}
\| \cc(\lambda)^{-1} h\|_m \leq \frac{B_m}{\alpha |\mu|}\|h\|_m,\: \forall h \in H^m(\g).
\end{equation}
Applying (\ref{eq:3.2}), we get
\[ - \Re ((N(\lambda) - \lambda \gamma(x))\varphi, \varphi)_{m+1} \geq (|\mu| - C_{m+ 1}) \|\varphi\|_{m+1}^2 + \mu (\gamma \varphi, \varphi)_{m + 1}.\]
On the other hand,
\[ |\mu (\gamma \varphi, \varphi)_{m+1}| \leq |\mu|  \delta \|\varphi\|_{m+1}^2 + A_{m} |\mu| \|\varphi\|_m \|\varphi\|_{m+1},\] 
where $A_{m}$ depends of the norms of the derivatives of $\gamma(x)$ of order $j \leq m.$
The above estimates imply
\[ A_{m} |\mu| \|\varphi\|_{m} \|\varphi\|_{m+1} +\|\cc(\lambda) \varphi\|_{m+1} \| \varphi\|_{m+1} \geq (\alpha |\mu| - C_{m+1}) \|\varphi\|^2_{m+1}.\]
We estimate $|\mu| \|\varphi\|_m$ from (\ref{eq:3.5}) and obtain
\[ (A_m B_m \alpha^{-1} + 1)\|\cc(\lambda) \varphi\|_{m+1} \geq (\alpha |\mu| - C_{m+1})\|\varphi \|_{m+1}. \]
Choosing $c_{m+1} = \max\{ \frac{2 C_{m+1}}{\alpha}, c_m\}$  and repeating the above argument with an approximation $\varphi_j \to g$ in $H^{m+2}(\g)$,  we conclude for $\mu \leq - c_{m+1}$ that
\[ \|\cc(\lambda)^{-1} h\|_{m+1} \leq \frac{B_{m+1}}{\alpha |\mu|} \|h\|_{m+1},\: \forall h \in H^{m+1}(\g).\]
By iteration we establish this estimate for every $m \in \N$. Next applying an interpolation argument for the spaces $H^m(\Gamma)$ and $H^{m+ 1}(\Gamma),$ we extend it  to $H^s(\Gamma), \: m < s < m+ 1$ and obtain the following
\begin{prop} For every $ s \geq 0$ there exist constants $c_s > 0, \: B_s > 0$ such that $\cc(\lambda)^{-1}: H^s(\g) \rightarrow H^{s+1}(\g)$  is analytic for $\mu \leq - c_s$ and in this domain we have
\begin{equation} \label{eq:3.6}
\|\cc(\lambda)^{-1} h\|_s \leq \frac{B_s}{\alpha |\mu|} \|h\|_s,\: \forall h \in H^s(\g).
\end{equation}
\end{prop}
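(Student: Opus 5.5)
The plan is to prove the estimate first for integer $s = m$ by induction on $m$, and then pass to non-integer $s$ by interpolation. The base of the induction is the coercive bound (\ref{eq:3.2}) with $m=0$. Writing $\lambda = \mu + \ii k$, we have $\Re(\lambda\gamma\varphi,\varphi)_0 = \mu(\gamma\varphi,\varphi)_0$. Since $\mu<0$ and $\gamma \le \delta < 1$, this term costs at most $\delta|\mu|\|\varphi\|_0^2$. Hence $-\Re(\cc(\lambda)\varphi,\varphi)_0 \ge ((1-\delta)|\mu| - C_0)\|\varphi\|_0^2$, and Cauchy--Schwarz gives (\ref{eq:3.3}) for $\mu\le -c_0$.

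Analyticity comes from the Fredholm structure. Write $\cc(\lambda)=M(\lambda)N(\lambda)$, where $M(\lambda)=\id-\lambda\gamma N(\lambda)^{-1}$ is identity plus compact. A pole of $\cc(\lambda)^{-1}$ at some $\lambda_0$ would produce a kernel element $\varphi_0$. Since $N(\lambda_0)^{-1}$ gains one derivative, $\varphi_0$ is smooth, and then (\ref{eq:3.3}) forces $\varphi_0 = 0$, a contradiction. So $\cc(\lambda)^{-1}$ is analytic for $\mu\le -c_0$. The same kernel argument works in every $H^s$, so the threshold $c_0$ does not depend on $s$ as far as analyticity goes.

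To get the norm bound (\ref{eq:3.6}), we pass from (\ref{eq:3.3}) for smooth $\varphi$ to $g = \cc(\lambda)^{-1}h$ by density. We approximate $g$ by $\varphi_j\in C^\infty(\g)$ in $H^{1}$ (respectively in $H^{m+1}$ at higher levels) and use the continuity $\cc(\lambda):H^{s+1}\to H^s$.

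The inductive step from $m$ to $m+1$ is the main point. We apply (\ref{eq:3.2}) at level $m+1$. The term $\mu(\gamma\varphi,\varphi)_{m+1}$ is no longer simply bounded by $\delta|\mu|\|\varphi\|_{m+1}^2$, because derivatives may fall on $\gamma$. We split it by the Leibniz rule: the top-order part is bounded by $\delta|\mu|\|\varphi\|^2_{m+1}$, and the commutator part by $A_m|\mu|\|\varphi\|_m\|\varphi\|_{m+1}$. The dangerous factor $|\mu|\|\varphi\|_m$ is controlled by the induction hypothesis as $\le B_m\alpha^{-1}\|\cc(\lambda)\varphi\|_m \le B_m\alpha^{-1}\|\cc(\lambda)\varphi\|_{m+1}$. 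Dividing by $\|\varphi\|_{m+1}$ then yields
\[(A_mB_m\alpha^{-1}+1)\|\cc(\lambda)\varphi\|_{m+1}\ge(\alpha|\mu|-C_{m+1})\|\varphi\|_{m+1}.\]
For $\mu\le -c_{m+1}$ with $c_{m+1}=\max\{2C_{m+1}/\alpha,c_m\}$ this gives the bound with constant $B_{m+1}$.

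Finally, for $m<s<m+1$ we interpolate the bounded family $\cc(\lambda)^{-1}$ between $H^m$ and $H^{m+1}$, taking $c_s=c_{m+1}$. The interpolated norm is at most $B_m^{1-\theta}B_{m+1}^{\theta}/(\alpha|\mu|)$, which gives (\ref{eq:3.6}). The main obstacle is the commutator term in the inductive step. It carries a factor $|\mu|$ that cannot be absorbed directly into $\alpha|\mu|\|\varphi\|_{m+1}$, and the induction hypothesis is needed precisely to handle it.
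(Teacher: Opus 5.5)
Your argument follows the paper's proof step for step: the base case from (\ref{eq:3.2}) with $m=0$, the kernel argument for analyticity, the density passage, the induction producing the factor $A_mB_m\alpha^{-1}+1$, and the interpolation. The base case, the analyticity of $\cc(\lambda)^{-1}$ for $\mu\le -c_0$, and (\ref{eq:3.4}) are fine. The inductive step, however, has a gap, and the paper's written proof has the same one. For $m+1\ge 1$ and non-constant $\gamma$, the number $z=(\gamma\varphi,\varphi)_{m+1}$ is not real, because multiplication by $\gamma$ is not symmetric for the $H^{m+1}$ inner product. For example, with the standard $H^1$ inner product, $\Im(\gamma\varphi,\varphi)_1=\Im\int_\g \varphi\,\nabla\gamma\cdot\overline{\nabla\varphi}$. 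This is of size $|\xi|\,\|\varphi\|_0^2$ when $\varphi$ oscillates with frequency $\xi$ transversally to the level sets of $\gamma$. Hence
\[
\Re\bigl(\lambda(\gamma\varphi,\varphi)_{m+1}\bigr)=\mu\,\Re z-k\,\Im z ,
\]
and you have dropped $-k\,\Im z$, which is only bounded by $|k|A_m\|\varphi\|_m\|\varphi\|_{m+1}$. Your induction hypothesis controls $|\mu|\,\|\varphi\|_m$, not $|k|\,\|\varphi\|_m$. Using it gives only $|k|\,\|\varphi\|_m\le (|k|/|\mu|)B_m\alpha^{-1}\|\cc(\lambda)\varphi\|_m$. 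So the constant at level $m+1$ grows like $1+|k|/|\mu|$, is not uniform on $\{\mu\le -c_{m+1}\}$, and the loss compounds. The interpolated bounds for non-integer $s>0$ inherit the problem. For constant $\gamma$ (the case of \cite{P3}), $z=\gamma\|\varphi\|_{m+1}^2$ is real, $A_m=0$, and your argument works.

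This is not just bookkeeping. The only property of $N(\lambda)$ your inductive step uses is (\ref{eq:3.2}). That inequality holds at every level, with $C_m=0$, if $N(\lambda)$ is replaced by multiplication by $\mu+\ii k g_0$, where $g_0$ is a regular value of $\gamma$ in $(\min\gamma,\max\gamma)$. For this operator, $\cc(\lambda)^{-1}$ is multiplication by $1/c$ with $c=\mu(1-\gamma)+\ii k(g_0-\gamma)$. One has $\|1/c\|_{L^\infty}\le((1-\delta)|\mu|)^{-1}$, consistent with (\ref{eq:3.4}). But $\nabla(1/c)=\lambda\nabla\gamma/c^2$ has size about $|k|/\mu^2$ on a layer of width about $|\mu|/|k|$ around $\{\gamma=g_0\}$. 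Testing on a fixed bump then shows the $H^1\to H^1$ norm is at least of order $|k|^{1/2}|\mu|^{-3/2}$, so (\ref{eq:3.6}) fails for $s=1$. Closing the induction therefore needs genuine information on the Dirichlet-to-Neumann map beyond (\ref{eq:3.2}). One example is a microlocal statement that $\cc(\lambda)$ is elliptic of size $|\lambda|$ at tangential frequencies $|\xi'|\le c|\Im\lambda|$, $c$ small, where $N(\lambda)\approx\lambda$ and $1-\gamma\ge\alpha$. This would give $|k|\,\|\varphi\|_m\le C(\|\varphi\|_{m+1}+\|\cc(\lambda)\varphi\|_m)$. The extra term is then at most $C\|\varphi\|_{m+1}^2+C\|\cc(\lambda)\varphi\|_m\|\varphi\|_{m+1}$, which is absorbed by $\alpha|\mu|\,\|\varphi\|_{m+1}^2$ for $|\mu|$ large.
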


\subsection{Existence of solution of the problem (\ref{eq:1.3})}
Let $\chi \in H^m(\Omega)$ with $m \in \N,\: m > d/2 + 3$ and $ \supp \chi \subset B(0, a) =\{x: \: |x| \leq a\}, \: a > \rho.$ 
 Consider an extension $\tilde{\chi} \in H^m(\R^d)$ such that $\tilde{\chi}\vert_{\Omega} = \chi$ and $\|\tilde{\chi}\|_{H^m(\R^d)}\leq M \|\chi\|_{H^m(\Omega)}.$ Let $g_0(t, x)$ be the solution of 
 \begin{equation} \label{eq:3.7}
 \begin{cases} (\pa_t^2-\Delta_x) g_0= 0\: {\rm in}\: \R \times \Omega,\\
g_0(0, x)  =  0, \: \pa_t g_0(0, x) = \tilde{\chi}(x).
 \end{cases}
 \end{equation}
 The function $g_0$ has the representation $g_0 = \frac{\sin(t\sqrt{-\Delta_0})}{\sqrt{-\Delta_0}}\tilde{\chi},$ where $-\Delta_0$ is the free Laplacian in $\Omega.$
 Set
 $$w_0(\lambda, x) = \int_{-\infty}^{\infty} e^{-\lambda t} H(t)g_0(t, x) dt, \: \Re \lambda \leq 0,$$
 where throughout our paper we use the Heaviside function $H(t) = 1$ for $ t \leq 0$ and $H(t) = 0$ for $t > 0.$
 We obtain   $(-\Delta + \lambda^2) w_0 = \tilde{\chi}$ and
 \begin{equation} \label{eq:3.8}
 q(\lambda, x) = (\pa_{\nu} - \lambda \gamma(x)) w_0(\lambda, x)\vert_{\Gamma} = \int_{-\infty}^{\infty} e^{-\lambda t} H(t)(\pa_{\nu} - \gamma \pa_t)g_0(t, x) \vert_{\Gamma}dt.
 \end{equation} 
 Thus  $w_0(\lambda, x) = R_0(\lambda)\tilde{\chi} \in H^{m+2}(\R^d),$ where $R_0(\lambda) = (- \Delta_0 + \lambda^2)^{-1}$ is the incoming resolvent of $-\Delta_0$ 
 and $w_0(\lambda, x)$ is analytic for $\Re \lambda \leq 0$.  Let $\| . \|_{H^m(B(0, b))} = \| .\|_{m, b}, \: b > \rho.$ It is well known that
 \begin{equation} \label{eq:3.9}
 \|R_0(\lambda)\chi\|_{k, b} \leq C_{a, b} |\lambda|^{k-1}\|\tilde{\chi}\|_{0, a},\:\:\Re\lambda \leq 0,  |\lambda| \geq L_0 > 1,\: k = 0, 1.
 \end{equation} 
  In fact, for $|x| \leq b,$ the sharp Huyghens  principle yields $g_0(t,x) = 0$ for $|t| > a + b$ and  (\ref{eq:3.9}) follows after an integration by parts.  The estimate (\ref{eq:3.9}) implies
\[ \|\pa_x^{\alpha} w_0\|_{k, b} \leq C_{a, b} |\lambda|^{k-1} \|\chi\|_{m, a}, \:\: |\alpha| \leq m,\: k = 0, 1\]
and  from the equation $\Delta \pa_x^{\alpha}w_0 - \lambda^2 \pa_x^{\alpha}w_0 = \pa_x^{\alpha}{\tilde{\chi}}$ we deduce 
\[ \|\pa_x^{\alpha} w_0 \|_{m-1, b} \leq C_{m, a, b} |\lambda|^{k-2} \|\chi\|_{m, a},\: |\alpha| = k = 0, 1.\]
 Since $m - 3 > d/2,$ the functions $w_0(\lambda, x)$ and $\pa_{x_j} w_0(\lambda, x), j =1,...,d$ are continuous in $|x| \leq b$ with uniform bound $\oc(|\lambda|^{-2})$ and $\oc(|\lambda|^{-1}),$ respectively, and
\[ \| q(\lambda, x)\|_{H^{m - 3/2}(\Gamma)}\leq D_m |\lambda|^{-1} \|\chi\|_{m, a}.\]

 Below up to end of this subsection we suppose that $\mu \leq - c_{m- 3/2}.$  Introduce the analytic function
 \[ h(\lambda, x) = \cc(\lambda)^{-1} q(\lambda, x) \in H^{m-1/2}(\Gamma).\]
 Applying the estimates (\ref{eq:3.6}), we deduce
 \[ \|h(\lambda, x)\|_{H^{m - 3/2}(\Gamma)} \leq B_{m-3/2} D_m(\alpha |\lambda \mu|)^{-1}\|\chi\|_{m, a}.\]
 Next, let $w_1(\lambda, x) = K(\lambda) h(\lambda, x) \in H^{m-2}(\Omega)$ be the solution of the Dirichlet problem (\ref{eq:2.1}) with $f = h(\lambda, x).$ 
 Clearly,
 \begin{equation} \label{eq:3.10}
 (\pa_{\nu} - \lambda \gamma(x)) w_1(\lambda, x) \vert_{\Gamma} = (N(\lambda) - \lambda \gamma(x))h(\lambda, x) = q(\lambda, x).
 \end{equation}
On the other hand, for  $w_1(\lambda, x)$ we have the estimates
 \begin{equation} \label{eq:3.11}
 \sum_{k + j\leq m-2} |\lambda|^j  \|w_1\| _{H^{k} (\Omega)} \leq K_m \|h\|_{H^{m - 3/2}(\g)}\leq M_m (\alpha|\lambda \mu|)^{-1}\|\chi\|_{m, a}.
 \end{equation}
 Consequently, the inclusion $H^{m-3}(\Omega) \subset C(\Omega)$ implies
 $$|w_1(\lambda, x)| \leq M_m |\lambda|^{-2}|\mu|^{-1} \|\chi\|_{m , a}$$
  uniformly with respect to $\mu \leq -c_{m-3/2}$ and $x \in \Omega.$
   For $(t, x) \in \R_t \times \bar{\Omega}$ and $c \geq c_{m- 3/2}$ define
 \[ g_1(t, x) = \frac{1}{2 \pi i} \int_{\mu = -c} e^{\lambda t} w_1(\lambda , x) d \lambda  = \frac{e^{-ct} }{2 \pi } \int_{-\infty}^{\infty} e^{\ii k t} w_1(- c + \ii k, x) dk.\]
 We may shift the contour of integration to the left, hence $g_1(\lambda, x)$ does not depend  of the choice of $c \geq c_{m-3/2}.$ 
  It is easy to see that $g_1(t, x) \equiv 0$ for $t \geq 0.$ 
  For fixed $t_0 \geq 0, x_0 \in \Omega$ one obtains 
\[ |g_1(t_0, x_0)| \leq \frac{e^{- c t_0}}{2 \pi} \int_{-\infty}^{\infty} |w_1(-c + \ii k, x_0)| dk
 \leq  \frac{A e^{-  c t_0}}{c}\int_{-\infty}^{\infty} \frac{1}{c_{m- 3/2}^2 + k^2} dk \leq \frac{Be^{- c t_0}}{c}\] 
 and taking $c \to \infty$, we conclude that $g_1(t_0, x_0) = 0.$  
 
According to (\ref{eq:3.11}), we get
\[\|w_1\|_{H^{m -2 -j}(\Omega)} = \oc_m(|\mu|^{-1}|\lambda|^{- j- 1}), \: j = 0,...,m-2.\]
In particular, since $m \geq 5,$ we have $\|\Delta w_1\|_{L^2} \leq \|w_1\|_{H^{m- 3}}= \oc_m(|\mu|^{-1} |\lambda|^{-2}).$
Therefore,
\[(2\pi i)\Delta g_1 = \int_{\mu = -c}e^{\lambda t} \Delta w_1(\lambda, x) d\lambda =  \lim_{R \to \infty} \int_{-c - iR}^{-c + iR} e^{\lambda t}\lambda^2 w_1(\lambda, x) d \lambda\]
\[ =  \lim_{R \to \infty} \pa_t^2 \int_{-c - i R}^{- c + iR} e^{\lambda t} w_1(\lambda, x) d \lambda = (2 \pi i )\pa_t^2 g_1(t, x).\]
Here in the last equality we take the derivative $\pa_t^2$ in the sens of distributions and we exchange $\pa_t^2$ and the limit $R \to \infty.$
This implies 
$$(\pa_t^2- \Delta_x) g_1 = 0, \: t < 0, x \in \Omega.$$
Finally, taking into account (\ref{eq:3.8}) and (\ref{eq:3.10}) and applying the inverse Laplace transformation,  for $ t < 0$ we deduce 
 \[(\pa_{\nu} - \gamma\pa_t) g_1(t, x)\vert_{\Gamma} = \frac{1}{2\pi i} \int_{\mu = -c} e^{\lambda t} q(\lambda, x) d\lambda \]
\[= \frac{1}{2 \pi i} \int_{\mu = -c} e^{\lambda t} \Bigl(\int_{-\infty}^0 e^{-\lambda s}(\pa_{\nu} - \gamma \pa_s)g_0(s, x)\vert_{\Gamma}ds\Bigr)d\lambda = (\pa_{\nu} -  \gamma \pa_t)g_0(t,  x)\vert_{\Gamma}.\]
Thus we obtain a solution $f(t,x) = g_0(t, x) - g_1(t, x)$ of the problem (\ref{eq:1.3}).
 Introduce the space $\mathcal H^m(\R_t^{-} \times \Omega)$ with norm
\[\|| w(t, x) \||_m = \sum_{j + |\alpha| \leq m} \|\pa_t^j \pa_x^{\alpha} w(t, x) \|_{L^2(\R_t^{-} \times \Omega)} .\]
The estimate (\ref{eq:3.11})  implies $e^{c_{m- 3/2} t} f(t, x) \in \mathcal H^{m-2}(\R_t^{-} \times \Omega)$ and we obtain the following
\begin{prop}
For $m >d/2 + 3$ there exists an operator
\begin{equation} \label{eq:3.12}
P(t) :H^m_{comp} (\Omega) \longrightarrow e^{-c_{m- 3/2} t} \mathcal H^{m-2}( \R_t^{-} \times \Omega)
\end{equation} 
satisfying
\begin{equation} \label{eq:3.13}
\begin{cases}
 (\pa_t^2 - \Delta_x) P(t) = 0, \: P(0) = 0, \: \pa_tP (0) = \id,\\
(\pa_{\nu} - \gamma(x) \pa_t) P(t) \vert_{R_t^{-} \times \Gamma} = 0.\end{cases}
\end{equation}
\end{prop}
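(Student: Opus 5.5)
We define $P(t)\chi := f(t,x) = g_0(t,x) - g_1(t,x)$ for $t \leq 0$, with $g_0$ from (\ref{eq:3.7}) and $g_1$ the inverse Laplace transform of $w_1 = K(\lambda)\cc(\lambda)^{-1} q(\lambda,\cdot)$ constructed above. The equations in (\ref{eq:3.13}) were essentially verified in the preceding discussion. What remains is to check the initial conditions, to justify the regularity (\ref{eq:3.12}) carefully, and to confirm that the map is linear and independent of the choices made.

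\textbf{Step 1: equation and boundary condition.} For $t<0$ we already have $(\pa_t^2-\Delta_x)g_1=0$. Also $g_0$ solves the free wave equation, so $f$ solves the wave equation in $\R_t^-\times\Omega$. The boundary identity $(\pa_\nu-\gamma\pa_t)g_1 = (\pa_\nu-\gamma\pa_t)g_0$ on $\R_t^-\times\g$ follows from (\ref{eq:3.8}), (\ref{eq:3.10}) and Laplace inversion. It is legitimate because $q(\lambda,\cdot)$ is the Laplace transform of $H(t)(\pa_\nu-\gamma\pa_t)g_0\vert_\g$, a function that vanishes for $t<-(a+\rho)$ by the sharp Huyghens principle. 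This gives the boundary condition in (\ref{eq:3.13}).

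\textbf{Step 2: initial data.} We have $g_0(0)=0$ and $\pa_tg_0(0)=\tilde\chi$, whose restriction to $\Omega$ is $\chi$. It remains to show $g_1(0,x)=0$ and $\pa_tg_1(0,x)=0$. We know $g_1\equiv 0$ for $t\geq 0$, which follows from shifting the contour $\mu=-c$ with $c\to\infty$ using $|w_1|=\oc(|\lambda|^{-2}|\mu|^{-1})$. Continuity of $g_1$ and $\pa_tg_1$ across $t=0$ then gives the claim. For this we use that $\lambda w_1(\lambda,x)$ is $\oc(|\lambda|^{-1}|\mu|^{-1})$, by (\ref{eq:3.11}) with $j=1$ together with Sobolev embedding. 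This bound alone is borderline for absolute integrability along $\mu=-c$, so we argue instead that $g_1\in H^{m-2}$ in $(t,x)$ near $t=0$ by Plancherel (see Step 3). Since $m-2>d/2+1$, the traces of $g_1$ and $\pa_t g_1$ at $t=0$ are defined and agree from both sides, and they vanish because $g_1\equiv 0$ for $t\geq 0$.

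\textbf{Step 3: regularity (the main point).} By Plancherel in $k$, for fixed $c=c_{m-3/2}$ we have
\[\|e^{ct}\pa_t^j\pa_x^\alpha g_1\|_{L^2(\R_t\times\Omega)} = (2\pi)^{-1/2}\|\lambda^j\pa_x^\alpha w_1(-c+\ii k,\cdot)\|_{L^2(\R_k\times\Omega)}.\]
Estimate (\ref{eq:3.11}) bounds the right side by $\int (c^2+k^2)^{-1}dk\,\|\chi\|_{m,a}^2<\infty$ whenever $j+|\alpha|\leq m-2$. Hence $e^{ct}g_1\in\mathcal H^{m-2}$. For $g_0$, energy conservation for the free wave equation gives $\|\pa^{\beta}g_0(t)\|_{L^2}\leq C(1+|t|)\|\chi\|_{H^m}$ for $|\beta|\leq m$, and the weight $e^{ct}$ makes this square integrable on $\R^-$. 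Linearity of $\chi\mapsto f$ is clear. Independence of the extension $\tilde\chi$ follows from finite speed of propagation together with the uniqueness argument given by Lemma 2.1 and the analyticity of $\cc(\lambda)^{-1}$ for $\mu\leq -c_0$.

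I expect the main obstacle to be the rigorous justification of the $t=0$ traces and of the interchange of $\pa_t^2$ with the contour limit. Both reduce to the uniform bound (\ref{eq:3.11}) through Plancherel, so I would handle them through that route.
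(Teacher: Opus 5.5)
Your proposal is correct and follows the paper's construction $f=g_0-g_1$ step for step. The Plancherel argument on $\mu=-c_{m-3/2}$, the trace argument for the Cauchy data at $t=0$, and the energy bound for $g_0$ are useful details the paper leaves implicit: it only asserts that (\ref{eq:3.11}) yields the weighted $\mathcal H^{m-2}$ bound and that $f$ solves (\ref{eq:1.3}).

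The one blemish is your remark on independence of $\tilde\chi$. The statement does not need it, since you can simply fix a linear extension operator. If you do want it, it comes from uniqueness of the backward problem in $e^{-ct}\mathcal H^{m-2}$. The Laplace transform of a difference of two solutions is an $L^2$, hence incoming, solution in the kernel of $\cc(\lambda)$, so it vanishes for $\mu\le -c_0$. Finite speed of propagation and Lemma 2.1 play no role there.
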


\section{Absence of eigenvalues $|\lambda| \geq A_0$}

To obtain  an  analytic continuation of the incoming resolvent for $ |\lambda| \geq A_0,$ we will follow the approach in section 4.6  in \cite{DZ}. Let $\chi_a \in C^{\infty}_0(\R^d)$ be  such that $\supp \chi_a \subset B(0, a)$ and $\chi_a \equiv1$ for $|x| \leq \rho + 2\ep, \: a \geq \rho +3 \ep, \:\ep > 0.$
Introduce a function $\zeta_a(t, x) \in C^{\infty}$ such that
$$\zeta_a(t, x) \vert_{|x| > \rho + 2 \ep} = \begin{cases} 1 \:\: {\rm for}\: 0 \geq t \geq -(|x| + T_a),\\0 \:\:{\rm for}\: t \leq -(x| + T_a + 1), \end{cases}$$
and $\zeta_a(t, x) = \varphi_a(t) $ for $|x| < \rho + \ep$ with 
$$\varphi_a(t) = \begin{cases} 1 \:\:{\rm for}\: 0 \geq t  \geq -(\rho + \ep + T_a),\\
0 \:\: {\rm for}\: t  \leq -(\rho + \ep + T_a + 1). \end{cases}$$
Here $T_a \geq a$ is a fixed constant. Moreover, we suppose that $\zeta_a(t, x) = 1$ for $0 \geq t \geq -T_a, \: x \in \Omega.$ Then for $|x| \leq \rho + \ep$ we have $\pa_t \zeta_a(t, x) = \psi_a(t) \in C_0^{\infty}(\R_t^{-}).$ 

 For $m > d/2 + 3$ consider the operator $P(t)$ given by Proposition 3.2 and introduce
$$\zeta_a P(t) \chi_a:  H^m(\Omega) \longrightarrow e^{-c_{m- 3/2} t} \mathcal H^{m-2}(\R_t^{-} \times \Omega).$$
 Therefore,
\[(\pa_t^2 - \Delta_x) (\zeta_a P(t) \chi_a)  = [\square , \zeta_a] P(t) \chi_a = : F_a(t)\]
and 
\[( \pa_{\nu} - \gamma(x) \pa_t) (\zeta_a P(t) \chi_a)\vert_{\R_t^{-} \times \g} =  -\gamma(x) \psi_a(t) P(t) \chi_a\big\vert_{\R_t^{-} \times \g} = : J_a(t).\]
Clearly, for $g \in H^m(\Omega)$ we have
$$\supp (F_a(t)g) \subset \{(t, x) \in \R_t^{-} \times \Omega:\: -(\max\{|x|, \rho + \ep\} + T_a + 1) \leq t \leq - T_a)\},$$
$$\supp (J_a(t)g) \subset \{ (t, x) \in \R_t^{-} \times \Gamma:\: -(\rho + \ep + T_a + 1) \leq t \leq - (\rho + \ep + T_a)\}.$$
The operator $P(t)$ has a Schwartz kernel $p(t, x, y) \in \mathcal D'( \R_t^{-} \times \Omega \times \R^d_y)$ such that 
\[ (P(t) f)(x) = \int_{\R^d} p(t, x, y) f(y) dy,\: f \in C_0^{\infty} (\R^d),\]
where the integral is taken in the sens of distributions. This implies that $J_a(t)$ has kernel $j_a(t, x, y) = \gamma(x) \psi_a(t) p(t, x, y) \chi_a(y)\vert_{x \in \g}.$
According to (\ref{eq:3.13}), we have 
$$\gamma(x)\psi_a(t) P(t)\chi_a \vert_{\Gamma} : H^{m}(\Omega) \rightarrow \mathcal H^{m -5/2}(\R_t^{-} \times \Gamma).$$ 
We will construct a kernel $\tilde{b}_a(t, x, y)$ such that
\begin{equation} \label{eq:4.1}
 \tilde{b}_a(t, x, y) = 0 \: {\rm for}\: -(\rho + \ep + T_a + 1) \leq t \leq 0, \: x \in \Gamma,
\end{equation}
\begin{equation}\label{eq:4.2}
 \pa_{\nu}\tilde{b}_a(t, x, y)= j_a(t, x, y) \: {\rm for}\: -(\rho + \ep + T_a + 1) \leq t \leq 0,\: x \in \Gamma.
\end{equation}
 
To arrange the above conditions we apply Remark 2.1 considering the variables $t, y$  as parameters.  By the construction with local coordinates, we have $\tilde{b}_a(t, x, y) = \alpha(x)j_a(t, x, y)$ with some smooth function satisfying $\alpha(x)\vert_{\g} = 0, \: \pa_{\nu} \alpha(x) \vert_{\g} = 1.$ Choose $\chi_b \in C_0^{\infty}(B(0, a))$ such that $\chi_b = 1$ near $B(0, \rho+ \ep)$ and $\chi_a \equiv1$ on the support of $\chi_b$ and define $b_a(t, x, y) = \chi_b(x) \tilde{b}_a(t, x, y)$. The conditions (\ref{eq:4.1}), (\ref{eq:4.2}) yield
\[(\pa_{\nu} - \gamma(x) \pa_t)b_a(t, x, y) \vert_{x \in \Gamma} = j_a(t, x, y).\]Next let $B_a(t): H^m(\Omega) \longrightarrow H^{m- 2}(\R_t^{-} \times \Omega)$ be the operator with kernel $b_a(t, x, y).$ The smoothness of $b_a(t, x, y)$ follows form that of 
$j_a(t, x, y)$. The choice of the correction $B_a(t)$ leads to the equality
\[ (\pa_{\nu} - \gamma \pa_t)\Bigl (\zeta_a P(t) \chi_a - B_a(t)\Bigr)\Big\vert_{x \in \Gamma} = 0\]
and we have
\[ \supp (B_a(t)g) \subset \{ (t, x): \: -(\rho +\ep + T_a + 1) \leq t \leq -(\rho + \ep + T_a),\: |x| \leq a\}.\]

Denote by $W_a(t)$ the solution of the problem
\begin{equation} \label{eq:4.4}
\square_0 W_a(t) =   -(1- \chi_b)F_a(t), 
 W_a(t) \equiv 0 \: {\rm for}\: t \geq 0.\
\end{equation} 
Here $\square_0 = \pa_t^2 - \Delta_0$ and $\Delta_0$ is the Laplacian in $\R^d.$
Repeating the argument of the proof of Theorem 4.3 in \cite{DZ}, it easy to see that there exists $C_a >T_a$ such that
\begin{equation}\label{eq:4.5}
{\rm supp} \: (W_a(t) g) \subset \{(t, x) \in \R_t^{-} \times \Omega: \:-( |x| + C_a) \leq t \leq -(|x| + T_a)\}.
\end{equation}
For completeness we present the details.  Write
    $$W_a(t) = -(1 - \chi_b) \zeta_a H(t)P(t) \chi_a  +H(t)U_0(t)((1 - \chi_b) \chi_a) +  Q_a(t),$$
   where $U_0(t)$ is the solution of the Cauchy problem
    \[ \begin{cases} \square_0 U_0 = 0,\\
    U_0(0) = 0,\: (\pa_t U_0) (0) = ( 1- \chi_b)\zeta_a \chi_a.\end{cases} .\]
    Then 
    $$\square_0 Q_a = \square_0 W_a +  (1- \chi_b) \square_0 \Bigl(\zeta_aH(t)P(t)\chi_a)\Bigr)  + [\Delta, \chi_b] \zeta_a H(t) P(t) \chi_a$$
    $$- \square_0 (H(t) U_0(t) (1 - \chi_b)\chi_a))= (1 - \chi_b) \chi_a + [\Delta, \chi_b] \zeta_a H(t) P(t) \chi_a  - (1-\chi_b) \chi_a $$
    $$=  [\Delta, \chi_b] \zeta_a H(t) P(t) \chi_a = r_{a, b}(t, x) ,$$
    $$Q_a(0) = (\pa_t Q_a)(0) = 0.$$
    We apply the Duhamel formula
    $$Q_a(t) = -\int_{t}^0 U_0(t- s, x ,y) r_{a,b}(s, y)dyds,$$
    where 
    $U_0(t, x, y)$ is the kernel of the operator $\frac{\sin(t \sqrt{-\Delta_0})}{\sqrt{-\Delta_0}}.$ By the sharp Huyghens principle with a constant $C_a > 0$ we obtain
    $$\supp \Bigl(Q_a(t)g\Bigl) \subset \{(t, x): \: -(|x| + C_a) \leq t \leq 0\}.$$
    Indeed, if $(t, x) \in \supp Q_a(t)g$, there exist $( s, y) \in \supp r_{a, b}$ and $t \leq s \leq 0$ such that $|x-y| = -(t- s)$ and this implies 
    $$- t \leq |x| + |y| - s \leq |x| + 2a + T_a + 1.$$
    For  $U_0(t)(1 - \chi_b) \chi_a$ by a finite speed of propagation argument one obtains the same result. This proves the lower bound of $t$ in (\ref{eq:4.5}). To establish the upper bound, we apply ones more the Duhamel formula for $W_a(t)$ writing
    \[ W_a(t)g = \int_t^0 U_0(t-s, x, y)  (1 - \chi_b(y)) F_a(s) g)(y) dy ds, \: t \leq 0.\]
    If $(t, x) \in \supp W_a(t) g,$  there exist $(s, y) \in \supp (1 - \chi_b) F_a(s)g(y)$ and $t\leq s \leq 0$ such that $|x- y| = -(t- s).$ Hence
    $$ -t \geq |x| - |y| - s\geq |x| - |y| + |y| + T_a = |x| + T_a.$$

 Choose a function  $\chi_c \in C_0^{\infty}(B(0, a))$ which is equal to 1 near $B(0, \rho)$ and such that $\chi_b \equiv
    1$ on $\supp \chi_c$ and introduce the approximative resolvent
    $$R^{\sharp}_a(\lambda) = \int_{-\infty}^0 e^{-\lambda t} \Bigl[\zeta_a P(t)\chi_a  + (1- \chi_c) W_a(t)- B_a(t) \Bigr]dt, \: \Re \lambda <  0.$$
    The integral is well defined since for fixed $x$ the supports with respect to $t$ of the functions under integration are bounded. For $m > d/2 + 3$ we have
    $R^{\sharp}_a(\lambda): \: H_{comp}^m(\Omega) \rightarrow H^{m-2}_{loc} (\Omega).$
    We obtain
    \[ (\Delta - \lambda^2) R^{\sharp}_a(\lambda) =  \chi_a \Bigl( \id+ L_a(\lambda)\Bigr), \:  \ (\pa_{\nu} - \lambda \gamma(x) ) R^{\sharp}_a(\lambda)\vert_{x \in\Gamma} = 0\]
     with  
    \[ L_a(\lambda) = \int_{-\infty}^0 e^{-\lambda t} \Bigl ( -\chi_b F_a(t) -  [\Delta, \chi_c] W_a(t)+ \square B_a(t)\Bigr) dt.\]
 
    After an integration by parts with respect to $t$ we deduce
    \[ L_a(\lambda) = \lambda^{-1} \int_{-\infty}^0 e^{-\lambda t} \Bigl( -\chi_b \pa_t F_a(t) - [\Delta, \chi_c] \pa_tW_a(t) + \pa_t \square B_a(t)\Bigr) dt. \]
    For the invertibility of $\id + L_a(\lambda)$ in $H^m(\Omega \cap B(0, a))$ we must estimate the $\|.\|_{m, a}$ norms of the terms under  integration. For $\pa_t F_a(t)$ one observes that  $\pa_t^2  P(t) = \Delta P(t)$, hence  $\|\pa_t F_a(t)\|_{m, a} \leq C_{1, m}\|\chi_a\|_{m + 4}, a.$ Similarly, for $\square B_a(t)$ we exploit once more the property of $P(t)$ since $b_a(t, x, y) = \chi_b(x)\alpha(x) p(t, x, y)$ and  conclude that $\|\pa_t \square B_a(t)\|_{m, a} \leq C_{2, m} \|\chi_a\|_{m + 4, a}$, Finally, $\pa_t W_a(t)$ can be estimated by $F_a(t)$ and we deduce 
    \[ \| L_a(\lambda)\|_{m, a} \leq D_{m+ 4, a} |\lambda|^{-1} \|\chi_a\|_{m+ 4, a}.\]
 Let $\rho < a_1 < a$. We fix $m > d/2 + 3$ and fix a function $\chi_a \in C_0^{\infty}(B(0, a))$ such that $\chi_a \equiv 1$ for $|x| \leq a_1.$ Choose $|\lambda| \geq 2D_{m + 4, a} \|\chi_a\|_{m + 4, a} = A_0.$   Then the operator $( \id + L_a(\lambda))^{-1}$ is invertible in $H^m(\Omega \cap B(0, a))$ and  for every $g \in H^m(\Omega)$ with $\supp g \subset B(0, a_1)$ by an analytic continuation we get
\[R_a(\lambda) g = R_a^{\sharp}(\lambda) (\id + L_a(\lambda))^{-1} g = \chi_a g = g,\: \Re \lambda < 0, \:|\lambda| \geq A_0.\] 
The function $u(x,\lambda) = R_a(\lambda)g$ is  analytic solution of (\ref{eq:2.4}) with $f = 0$. 
 According to Lemma 2.1, this implies that for every $s \geq 0$ the operator $\cc(\lambda)^{-1}: H^s(\g) \to H^{s+1}(\g)$ is analytic for $\Re \lambda < 0,\: |\lambda| \geq A_0$
 and we obtain the following
 \begin{prop} There exists $A_0 > 0$ such that the operator $G$ has no eigenvalues 
 $$\lambda \in \{\C: \: \Re \lambda \leq - c_0\} \cup \{\C: \: \lambda \in \C_{-},\: |\lambda| \geq A_0\}.$$
\end{prop}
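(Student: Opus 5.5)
The proof splits into the two regions named in the statement, and both reduce to the criterion from Section 2: $G$ has an eigenvalue $\lambda_0$ with $\Re\lambda_0<0$ if and only if the incoming resolvent $(G-\lambda)^{-1}$ has a pole at $\lambda_0$. By \cite{P1}, \cite{P3}, this happens if and only if $\cc(\lambda)^{-1}$ is not analytic at $\lambda_0$, that is, there is $0\neq v\in H^{3/2}(\g)$ with $\cc(\lambda_0)v=0$.

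\emph{Region $\Re\lambda\le -c_0$.} Suppose $\lambda=\lambda_0$, $\mu\le -c_0$, is an eigenvalue with eigenvector $(f_1,f_2)$. Then $f_1$ is a $\lambda_0$-incoming solution of $(-\Delta+\lambda_0^2)f_1=0$, and $v=f_1|_\g$ satisfies $\cc(\lambda_0)v=0$. We have $v\neq0$, because otherwise $f_1$ solves the incoming Dirichlet problem and $f_1=K(\lambda_0)0=0$. By the remark preceding Lemma 2.1, the compactness of $N(\lambda_0)^{-1}$ gives $v\in C^\infty(\g)$. Applying (\ref{eq:3.3}) to $\varphi=v$ gives $0=\|\cc(\lambda_0)v\|_0\ge \frac{\alpha|\mu|}{2}\|v\|_0>0$, a contradiction. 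This is exactly the argument following (\ref{eq:3.3}), which shows $\cc(\lambda)^{-1}$ is analytic for $\mu\le -c_0$.

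\emph{Region $\lambda\in\C_-$, $|\lambda|\ge A_0$.} Here I would use the approximative resolvent. Fix $m>d/2+3$, $\rho<a_1<a$, and $\chi_a\equiv1$ on $B(0,a_1)$. The construction of $R_a^\sharp(\lambda)$ is built from $P(t)$ of Proposition 3.2 and the corrections $W_a$, $B_a$, whose time supports are compact by (\ref{eq:4.5}) and the support of $b_a$. It satisfies $(\Delta-\lambda^2)R_a^\sharp=\chi_a(\id+L_a(\lambda))$ and $(\pa_\nu-\lambda\gamma)R_a^\sharp|_\g=0$. The integral defining it initially converges for $\mu\le -c_{m-3/2}$, and compact time support gives its entire extension in $\lambda$. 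Integration by parts in $t$ gives $\|L_a(\lambda)\|_{m,a}\le D|\lambda|^{-1}$, so for $|\lambda|\ge A_0$ a Neumann series makes $\id+L_a(\lambda)$ invertible on $H^m(\Omega\cap B(0,a))$, analytically in $\lambda$.

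For $g\in H^m$ with $\supp g\subset B(0,a_1)$, set $u=R_a^\sharp(\id+L_a)^{-1}g$. Since $\chi_a\equiv1$ on $B(0,a_1)$, this $u$ solves (\ref{eq:2.4}) with $f=0$ and is analytic near each such $\lambda_0$. The delicate point is that $u$ is $\lambda$-incoming. Outside a large ball the $(1-\chi_c)W_a$ term dominates, and it is the Laplace transform of a free wave supported in $t\le-(|x|+T_a)$; so I would check incomingness there, and for $\mu\ll0$ compare $u$ with the incoming resolvent by uniqueness. Lemma 2.1 (with $k=m$) then shows $\cc(\lambda)^{-1}$ is analytic at $\lambda_0$, so $\lambda_0$ is not an eigenvalue.

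The main obstacle I expect is the uniform bound on $L_a$: one must control $\pa_tF_a$, $\pa_t\square B_a$ and $[\Delta,\chi_c]\pa_tW_a$ in the $\|\cdot\|_{m,a}$ norm using only the regularity $\mathcal H^{m-2}$ from (\ref{eq:3.12}) with weight $e^{-c_{m-3/2}t}$. This forces starting from $\chi_a$ with more derivatives ($m+4$) and exploiting $\pa_t^2P=\Delta P$ on the compact $t$-supports, where the exponential weight is harmless.
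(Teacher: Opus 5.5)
Your proposal is correct and follows the paper's proof essentially step for step. The half-plane $\Re\lambda\le -c_0$ is excluded by applying the coercive estimate (\ref{eq:3.3}) to a smooth nonzero $v$ with $\cc(\lambda_0)v=0$; the region $|\lambda|\ge A_0$ is handled by the approximate resolvent $R_a^\sharp(\lambda)$, the bound $\|L_a(\lambda)\|_{m,a}=\oc(|\lambda|^{-1})$ from one integration by parts in $t$, a Neumann series for $\id+L_a(\lambda)$, and Lemma 2.1.

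The $\lambda$-incomingness of $u$, which you rightly flag, is left implicit in the paper, but your heuristic that $(1-\chi_c)W_a$ ``dominates'' is not how it works: $W_a$ alone has source $(1-\chi_b)F_a$, which need not be compactly supported in $x$. The clean argument uses the sum. For $|x|\ge a$, where $\chi_b=\chi_c=0$, one has $\zeta_aP(t)\chi_a+(1-\chi_c)W_a(t)=H(t)U_0(t)((1-\chi_b)\chi_a)+Q_a(t)$. This is a free wave with compactly supported data and source, so its Laplace transform is $R_0^-(\lambda)$ applied to compactly supported functions, hence $\lambda$-incoming.
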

 It is important to note that $D_{m + 4, a}$ depends of constants $A_{m+4}, B_{m+ 4}, K_{m+4}$ in subsection 3.2.
 
\section{Absence of eigenvalues  $|\lambda| \leq A_0$}
Consider for $\ep \in [0, 1]$ the operator
$$\cc_{\ep}(\lambda) : = N(\lambda) - \lambda \ep \gamma(x) = \Bigl( \id - \lambda \ep\gamma(x)N(\lambda)^{-1}\Bigr)N(\lambda).$$
Let $e^{t G_{\ep}}$ be the semi-group with generator $G_{\ep}$ defined with boundary condition $(\pa_{\nu} - \ep \gamma(x) \pa_t) f\vert_{\g} = 0.$
 Obviously, $\max_{x \in \g} \ep \gamma(x) = \ep m_0 < 1.$ Set $\alpha_{\ep} = 1 - \ep m_0 \geq \alpha.$ 
 Below for simplicity we write $H^s$ instead of $H^s(\Gamma).$ It is easy to see that for $\mu  \leq -c_0$ we have
\begin{equation}\label{eq:5.2}
 - \Re ( (N(\lambda) - \lambda \ep \gamma(x)) \varphi, \varphi)_0 \geq (\alpha_{\ep} |\mu| - C_0) \|\varphi\|_0^2,\:\: \forall \varphi \in C^{\infty}(\g)
 \end{equation}
with constant $C_0 > 0$ independent of $\ep$ and $\mu.$ By the argument in subsection 3.1 we conclude that the operator $\cc_{\ep}(\lambda)^{-1}: H^s \to H^{s+1}$ is analytic for $\mu \leq -c_0$ uniformly with respect to $\ep.$  Recall the constants $c_m = \max \big\{ \frac{2 C_m}{\alpha}, c_{m-1}\big\}> 0$ defined in subsection 3.1. Since  $C_m$ in (\ref{eq:3.2}) are independent of $\gamma(x),$ we obtain 
\[c_{m, \ep} = \max\Big\{\frac{2 C_m}{\alpha_{\ep}}, c_{m-1,\ep}\Big\} \leq c_m.\]
Moreover, for $\mu \leq -c_m$ we have  estimates
\begin{equation}  \label{eq:5.2}
\|\cc_{\ep}(\lambda)^{-1} h\|_s \leq \frac{B_s}{\alpha_{\ep} |\mu|} \|h\|_s \leq \frac{B_s}{\alpha |\mu|},\: \forall h \in H^s(\g)
\end{equation}
with  $B_s > 0$ independent of $\ep.$ To do this, we repeat the argument of subsection 3.1 and recall  that the constants $A_m$ depend of the norms of the derivatives of $\ep \gamma(x)$ of order $j \leq m$, hence they can be chosen to be independent of $\ep.$  Repeating the proof of subsection 3.2, we construct an operator 
\[P_{\ep}(t) :H^m_{comp} (\Omega) \longrightarrow e^{-c_{m- 3/2} t} \mathcal H^{m-2}( \R_t^{-} \times \Omega)\]
satisfying
\begin{equation} \label{eq:5.3}
\begin{cases}
 (\pa_t^2 - \Delta_x) P_{\ep}(t) = 0, \: P_{\ep}(0) = 0, \: \pa_t P_{\ep} (0) = \id,\\
(\pa_{\nu} - \ep\gamma(x) \pa_t) P_{\ep}(t) \vert_{R_t^{-} \times \Gamma} = 0.\end{cases}
\end{equation}
The analysis of Section 4 implies that $\cc_{\ep}(\lambda)^{-1}$ has an analytic continuation for $\Re \lambda < 0, \: |\lambda| \geq A_0,$ where $A_0 > 0$ is independent of $\ep.$ This follow from the fact that the constants $A_{m+4}, B_{m+ 4}, K_{m+4}$ in subsection 3.2 can be chosen uniformly with respect to $\ep.$ 
 Denote by $A(\lambda, \ep)$ the compact operator
$$A(\lambda, \ep) =  \lambda \ep\gamma(x)N(\lambda)^{-1} : \: H^{1/2}\rightarrow H^{3/2}.$$
 Consider the  domain $\omega \subset \bar{C}_{-}$ bounded by the the contour 
$$\zeta = \{z \in \C: \Im z = -A_0, \: - c_0 \leq \Re z \leq 0\} \cup \{z \in \C: \Re z = 0,\: |\Im z| \leq A_0 \} $$
$$\cup \{z \in \C: \Im z = A_0, \: - c_0 \leq \Re z \leq 0\} \cup \{z \in \C: \Re z = - c_0,\: |\Im z| \leq A_0\}.$$

 For sufficiently small $\ep$ the operator $C _{\ep}(\lambda)$ is invertible for $\lambda \in \omega.$ Indeed, for all $\lambda \in \omega$ we have $|\lambda| \leq B_0$ and $\|\gamma(x)N(\lambda)^{-1}\|_{H^{1/2} \to H^{1/2}} \leq B_1.$
 Then for $\ep \leq \frac{1}{2 B_0B_1}= \eta_0 $ one arranges
\[\| \lambda \ep \gamma(x) N(\lambda)^{-1} \|_{H^{1/2} \to H^{1/2}} \leq 1/2,\: \lambda \in \omega.\]
This implies that $(\id - A(\lambda, \ep))$ is invertible in $H^{1/2}$ and $\cc_{\ep}(\lambda)^{-1}$ is analytic for $\lambda \in \omega.$ Define
\[ \eta = \sup\{\ep:\: \ep \in [0, 1]:\: \cc_{\ep}(\lambda)^{-1} \: {\text {is analytic for}\: \lambda \in \omega} \}.\]
Obviously, $\eta_0 \leq \eta \leq 1.$
Notice that if $\cc_{\ep}(\lambda)$ and $\cc_{\eta}(\lambda)$ are invertible, we have the equality
\begin{equation} \label{eq:5.4}
\cc_{\ep} (\lambda)^{-1}  \Bigl( \id -(\ep - \eta) \lambda \gamma \cc_{\eta}(\lambda)^{-1}\Bigr) = \cc_{\eta}(\lambda)^{-1}. 
\end{equation}

We have two cases: (i) $\eta = 1,$ (ii) $\eta < 1.$ If $\eta = 1$ and $\cc(\lambda) = \cc_1(\lambda)$ is invertible for $\lambda \in \omega,$ the operator $G$ has no eigenvalues in $\omega$.  In the case (ii)  if  $\cc_{\eta}(\lambda)$ is invertible for $\lambda \in \omega$, by using (\ref{eq:5.4}), we obtain that $\cc_{\ep}(\lambda)$ is invertible for $1 > \ep > \eta, \: \lambda \in \omega$
 and $\ep - \eta$ sufficiently close to 0. To prove this, for $\ep - \eta > 0$ sufficiently small and $\cc_{\ep}(\lambda)^{-1}$ analytic, we write
 \begin{equation} \label{eq:5.5}
 \cc_{\ep}(\lambda)^{-1} = \cc_{\eta}(\lambda)^{-1} \Bigl( \id -(\ep - \eta) \lambda \gamma \cc_{\eta}(\lambda)^{-1}\Bigr)^{-1}.
 \end{equation}
 The right hand part remains bounded for $\lambda \in \omega$ and $\ep - \eta > 0$ small and we deduce the analyticity of $\cc_{\ep}(\lambda)^{-1}$ for $\lambda \in \omega$ by analytic continuation.
  This implies a contradiction with the  definition of $\eta.$ Consequently, in both cases (i) and (ii) we can assume that $\cc_{\eta}(\lambda)^{-1}$ has a pole $\lambda_0 \in \mathring{\omega},$ where $\mathring{\omega}$ denotes the interior of $\omega.$
  
 Choose a small $\delta > 0$ so that $\{z \in \C:\: |z- \lambda_0|\leq \delta \} \subset \mathring{\omega}$ and $\cc_{\eta}(\lambda)^{-1}$ has no poles $\lambda$ with $0 < |\lambda - \lambda_0| \leq \delta.$ Let  $\gamma_0 = \{ z  \in \C: \: |z- \lambda_0| = \delta\}$ be positively oriented.  Consider the projector
 \[\Pi_{\eta}(\lambda_0)  = \frac{1}{2 \pi \ii} \int_{\gamma_0} (\lambda - G_{\eta})^{-1} d\lambda \]
having rank equal to the multiplicity of the eigenvalue $\lambda_0$ of $G_{\eta}.$ With a constant $D_0 > 0$ independent of $\lambda$ one has  a bound $\|(G_{\eta} - \lambda)^{-1}\|_{\mathcal H} \leq D_0,\: \lambda \in \gamma_0.$ We claim that for $0 <\ep < \eta$ and $\eta - \ep$ sufficiently small we have
\begin{equation} \label{eq:5.6}
 \|(G_{\eta} - \lambda)^{-1} - (G_{\ep}- \lambda)^{-1} \|_{\mathcal H} \leq C_0 |\ep - \eta|,\: \forall \lambda \in \gamma_0.
\end{equation}
with $C_0 > 0$ independent of $\ep$ and $\lambda.$
This follows from a perturbation argument (see Section 5 in \cite{CPR}).  We present a direct proof. 
First, for $\lambda \in \gamma_0$ the operator $\cc_{\eta}(\lambda)$ is invertible and $\|\gamma(x) \cc_{\eta}(\lambda)^{-1}\|_{H^{1/2} \to H^{3/2}}\leq D_1, \: \lambda \in \gamma_0$. Applying (\ref{eq:5.5}) for $|\ep - \eta| \leq \frac{1}{2 B_0 D_1}$ and $\lambda \in \gamma_0$ we deduce a bound 
\[ \|\cc_{\ep}(\lambda)^{-1}\|_{H^{1/2} \to H^{1/3}}\]
\[ \leq \|\cc_{\eta}(\lambda)^{-1}\|_{H^{1/2} \to H^{3/2}} \Big\|\Bigl(\id - (\ep - \eta) \lambda \gamma \cc_{\eta}(\lambda)^{-1}\Bigr)^{-1}\Big\|_{H^{1/2} \to H^{1/2}} \leq 2 D_1.\]
Taking into account the bounds for $\cc_{\eta}(\lambda)^{-1}$ and $\cc_{\ep}(\lambda)^{-1},$ from (\ref{eq:5.4}) we get
\[\| \cc_{\eta}(\lambda)^{-1} - \cc_{\ep}(\lambda)^{-1}\|_{H^{1/2} \to H^{3/2}} \leq 2 |\ep - \eta| B_0 D_1^2,\: \lambda \in \gamma_0.\]
Next we exploit the representation (\ref{eq:2.3})  for the first components $u_{\eta}(x, \lambda)$ and $u_{\ep}(x, \lambda)$ of $(G_{\eta} - \lambda)^{-1}(f,g)$ and $(G_{\ep} - \lambda)^{-1}(f, g)$ with $\cc_{\eta}(\lambda)^{-1}$ and $\cc_{\ep}(\lambda)^{-1},$ respectively. Choose a function $\varphi \in C_0^{\infty}(\R^d)$  equal to 1 for $|x| \leq a,\: a > \rho.$ For $\lambda \in \gamma_0$ we obtain
\[\big \|( \cc_{\eta}(\lambda)^{-1} - \cc_{\ep}(\lambda)^{-1}) \Bigl(\pa_{\nu} R_D(\lambda)(g + \lambda f)\vert_{\g} + \gamma f\vert_{\g}\Bigr)\big\|_{H^{3/2}} \]
\[\leq |\ep - \eta| B_0 D_2 D_1^2 \Bigl(\|\varphi R_D(\lambda)(g + \lambda f) \|_{H^1(\Omega)}+ \|\varphi f\|_{H^1(\Omega)}\Bigr).\]
We prove the claim  (\ref{eq:5.6}) by using the estimates for the resolvent $R_D(\lambda)$ and the operator $K(\lambda).$ Next for $\eta - \ep > 0$ sufficiently small we obtain
\begin{equation} \label{eq:5.7}
\Big\| \frac{1}{2\pi \ii}\int_{\gamma_0} \Bigl((\lambda - G_{\eta})^{-1} - (\lambda - G_{\ep})^{-1}\Bigl) d\lambda\Big \|_{\mathcal H} \leq C_0 \delta |\ep-\eta|.
\end{equation}
The analyticity of $\cc_{\ep}(\lambda)^{-1}$ for $\ep < \eta$ implies $\int_{\gamma_0} (\lambda - G_{\ep})^{-1} d\lambda = 0$. For  small $\eta- \ep$ the estimate (\ref{eq:5.7}) leads to $\|\Pi_{\eta}(\lambda_0)\|_{{\mathcal H} \to {\mathcal H}} < 1$ and this yields a contradiction with the existence of the pole $\lambda_0.$ This completes the proof of Theorem 1.1.


\begin{thebibliography}{99} 
 
  \bibitem{CPR} F. Colombini, V. Petkov and J. Rauch, {\em Spectral problems for non elliptic symmetric systems with dissipative boundary conditions}, Journal of Funct. Analysis, {\bf 267} (2014), 1637-1661. 
 \bibitem{DZ} S. Dyatlov and M. Zworski, Mathematical Theory of Scattering Resonances. Graduate
Studies in Mathematics. American Mathematical Society, 2019.
 
 \bibitem{I1}M. Ikawa, {\em Problèmes mixtes pour l'équation des ondes}, Publ. RIMS, Kyoto Univ. {\bf 12} (1976), 55-121.
 
  \bibitem{I2} M. Ikawa, {\em Problèmes mixtes pour l'équation des ondes, ${\rm II}$}, Publ. RIMS, Kyoto Univ. {\bf 13} (1977), 61-106.
 
 \bibitem{I3} M. Ikawa, {\em Mixed problems for the wave equation ${\rm IV}$, The existence and the  exponential decay of solutions}, J. Math. Kyoto Univ. {\bf 19} (3) (1979), 375-411. 
 
 
  \bibitem{LP1} P. Lax and R. Philips, {\em Scattering theory for dissipative hyperbolic systems}, J. Funct. Anal. {\bf 14} (1973), 172-235.

\bibitem{Ma} A. Majda, {\em The location of the spectrum of the dissipative acoustic operator}, Indiana Univ. Math. J. {\bf 25} (10) (1976), 973-987.
  
 \bibitem{P1} V. Petkov,  {\em Location of the eigenvalues of the wave equation with dissipative boundary conditions}, Inverse Problems and Imaging, {\bf 10} (4) (2016), 1111-1139.
  
  \bibitem{P2} V.Petkov, {\em Weyl formula for the eigenvalues of the dissipative acoustic operator}, Res. Math. Sci.  {\bf 9} (1) (2022), Paper 5.  
  
 \bibitem{P3}  V. Petkov, {\em Eigenvalues and resonances of dissipative acoustic operator for strictly convex obstacles}, arXiv: 2310.01192. math.AP. 

  \end{thebibliography}
\end{document}